\newtheorem{theorem}{Theorem}
\newtheorem{lemma}[theorem]{Lemma}
\newtheorem{corollary}[theorem]{Corollary}
\newtheorem{definition}[theorem]{Definition}
\newtheorem{remark}[theorem]{Remark}
\newtheorem{proposition}[theorem]{Proposition}
\numberwithin{theorem}{section}
\numberwithin{equation}{section}
\def\N {{\mathbb N}}
\def\Z {{\mathbb Z}}
\def\R {{\mathbb R}}
\DeclareMathOperator{\diff}{d\!}
\DeclareMathOperator{\supp}{supp}
\DeclareMathOperator{\lac}{lac}
\DeclarePairedDelimiter\abs{\lvert}{\rvert}
\DeclarePairedDelimiter\norm{\lVert}{\rVert}
\def\<{\left\langle}
\def\>{\right\rangle}
\title{Duality for outer $L^{p,\infty}$ spaces}
\author{Marco Fraccaroli}
\email{mfraccar@math.uni-bonn.de}
\subjclass[2020]{42B35 (Primary), 46E30 (Secondary)}
\keywords{outer Lorentz $L^{p,q}$ spaces, K\"{o}the duality for $L^{p,\infty}$ quasi-norms, outer measures, upper half $3$-space.}
\begin{document}
	
\date{\today}
	
\begin{abstract}	
	The $L^{p,\infty}$ quasi-norm of functions on a measure space can be characterized in terms of their pairing with normalized characteristic functions. We generalize this result to the case of the outer $L^{p,\infty}$ quasi-norms for appropriate sizes. This characterization provides an explicit form for certain implicitly defined quasi-norms appearing in an article of Di Plinio and Fragkos.  
\end{abstract}

\maketitle

\section{Introduction}

The characterization of the classical $L^{p,\infty}$ quasi-norm (or weak $L^p$ quasi-norm) of functions on a measure space $(X,\omega)$ in terms of their pairing with appropriately normalized characteristic functions is a standard result in analysis. On one hand, for every $p \in (1,\infty]$ it exhibits a K\"{o}the duality between the $L^{p,\infty}$ spaces and the collection of characteristic functions endowed with the $L^{p',1}$ quasi-norm analogous to the K\"{o}the duality between the $L^{p}$ and $L^{p'}$ spaces. On the other hand, for every $p \in (0,1]$ the pairing is more subtle. 

To state both of the characterizations, for every measure space $(X,\omega)$ we introduce the following auxiliary notations. First, we define $\Sigma$ to be the collection of $\omega$-measurable subsets of $X$ and $\mathcal{M}$ to be the collection of $\Sigma$-measurable functions on $X$. Next, we define the collection $\Sigma_\omega \subseteq \Sigma$ by
\begin{equation*}
\Sigma_\omega \coloneq \Big\{ A \in \Sigma \colon \omega(A) \notin \{ 0, \infty \}   \Big\},
\end{equation*} 
and for every measurable subset $A \in \Sigma_\omega$ we define the collection $\Sigma''_\omega(A) \subseteq \Sigma$ by
\begin{equation*}
\Sigma''_\omega(A) \coloneq \Big\{ B \in \Sigma \colon B \subseteq A, \omega(B) \geq \frac{\omega(A)}{2} \Big\}.
\end{equation*}
We have the following characterization of the $L^{p,\infty}$ quasi-norms.\footnote{Throughout the article the notation $\alpha \sim_C \beta $ with $\alpha, \beta, C > 0$ means that $\alpha \leq C \beta$ and $\beta \leq C \alpha$.}

\begin{theorem} [e.g. Lemma~2.6 in \cite{MR3052499}] \label{thm:base}
For every $p \in (0, \infty]$ there exists a constant $C = C(p)$ such that for every measure space $(X,\omega)$ the following properties hold true.

\begin{enumerate} [(i)]
	\item For every $p > 1$ and every function $f \in L^{p,\infty}(X,\omega)$ we have
\begin{equation*}
\norm{f}_{L^{p,\infty}(X,\omega)} \sim_{C} \sup \Big\{ \omega(A)^{\frac{1}{p} - 1} \norm{f 1_A}_{L^1(X,\omega)} \colon A \in \Sigma_\omega \Big\}. 
\end{equation*}

\item For every $p \leq 1$ and every function $f \in L^{p,\infty}(X,\omega)$ we have
\begin{equation*}
\norm{f}_{L^{p,\infty}(X,\omega)} \sim_{C} \sup \Big\{ \inf \Big\{ \omega(A)^{\frac{1}{p} - 1} \norm{f 1_B}_{L^1(X,\omega)} \colon B \in \Sigma''_\omega(A) \Big\} \colon A \in \Sigma_\omega \Big\}.
\end{equation*}
\end{enumerate}
\end{theorem}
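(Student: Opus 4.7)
The plan is to prove each of the two inequalities in (i) and (ii) separately, starting from the definition $\|f\|_{L^{p,\infty}(X,\omega)} = \sup_{\lambda > 0} \lambda\, \omega(\{ |f| > \lambda \})^{1/p}$ and combining an elementary layer-cake decomposition with an optimization in $\lambda$.

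For the direction bounding the right-hand side by a constant multiple of the left-hand side in (i), I fix $A \in \Sigma_\omega$, set $\lambda = \|f\|_{L^{p,\infty}(X,\omega)} \omega(A)^{-1/p}$, and split
\begin{equation*}
\|f 1_A\|_{L^1(X,\omega)} = \int_0^\infty \omega(\{|f| > t\} \cap A)\, dt \leq \lambda\, \omega(A) + \int_\lambda^\infty \omega(\{|f|>t\})\, dt,
\end{equation*}
using the pointwise bound $\omega(\{|f|>t\} \cap A) \leq \min(\omega(A), \|f\|_{L^{p,\infty}}^p t^{-p})$. Since $p > 1$, the tail integral converges and equals $\tfrac{1}{p-1} \|f\|_{L^{p,\infty}}^p \lambda^{1-p}$, which together with the first term produces a constant multiple of $\|f\|_{L^{p,\infty}} \omega(A)^{1 - 1/p}$, as required. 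In (ii) I instead choose $\lambda = 2^{1/p} \|f\|_{L^{p,\infty}} \omega(A)^{-1/p}$, so that $\omega(\{|f| > \lambda\}) \leq \omega(A)/2$, and take $B = A \setminus \{|f|>\lambda\}$; then $B \in \Sigma''_\omega(A)$, $|f| \leq \lambda$ on $B$, and $\|f 1_B\|_{L^1} \leq \lambda\, \omega(A)$, giving the desired upper bound on the inner infimum through this particular admissible $B$.

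For the reverse direction, I fix $\lambda > 0$ with $\omega(\{|f| > \lambda\}) \in (0,\infty)$ and consider subsets $A \subseteq \{|f|>\lambda\}$ with $A \in \Sigma_\omega$. In (i) one immediately has $\|f 1_A\|_{L^1} \geq \lambda\, \omega(A)$, so $\omega(A)^{1/p-1} \|f 1_A\|_{L^1} \geq \lambda\, \omega(A)^{1/p}$. In (ii), every $B \in \Sigma''_\omega(A)$ is still contained in $\{|f|>\lambda\}$, so $\|f 1_B\|_{L^1} \geq \lambda\, \omega(B) \geq \lambda\, \omega(A)/2$, and the infimum over $B$ is bounded below by the same quantity up to a factor of $2$. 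In either case, taking the supremum of $\omega(A)^{1/p}$ over admissible $A \subseteq \{|f|>\lambda\}$ recovers $\omega(\{|f|>\lambda\})^{1/p}$, and the outer supremum in $\lambda$ then reconstructs $\|f\|_{L^{p,\infty}(X,\omega)}$.

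The main subtlety is purely measure-theoretic bookkeeping due to the constraint $\omega(A) \notin \{0,\infty\}$ in the definition of $\Sigma_\omega$: in the reverse direction I must exhaust $\{|f|>\lambda\}$ by subsets of finite strictly positive $\omega$-measure, which is permitted because $\omega(\{|f|>\lambda\}) \leq \lambda^{-p} \|f\|_{L^{p,\infty}}^p < \infty$ whenever $p < \infty$ and $f \in L^{p,\infty}$; the trivial case $p = \infty$ is handled separately by choosing $\lambda$ slightly below $\|f\|_{L^{\infty}}$ and testing against any $A$ inside the corresponding level set. Beyond this, the proof requires no new idea: the whole content is the balance between the contributions from $\{|f|>\lambda\}$ and its complement, and the optimal choice of $\lambda$ in terms of $\omega(A)$.
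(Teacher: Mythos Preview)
The paper does not prove this theorem; it is quoted as a known result from the literature (Lemma~2.6 in \cite{MR3052499}) and serves only as motivation for the outer-measure generalizations in Theorems~\ref{thm:intro_first} and~\ref{thm:intro_second}. Your argument is correct and is the standard classical proof.

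If one nevertheless compares with the paper's proofs of the generalizations, then in the specialization $\mu=\omega$, $r=\infty$, $a=q=1$ (handled as ``Case~I: $r=\infty$'' in Sections~\ref{sec:first_proof} and~\ref{sec:other_proof}) the paper follows essentially the same route as you: the lower bounds come from testing against the level set $\Omega_\rho\subseteq\{|f|>\rho\}$, and the upper bound in~(ii) comes from excising $\{|f|>\lambda_0\}$ with $\lambda_0 = 2^{1/p}\omega(A)^{-1/p}\|f\|_{L^{p,\infty}}$ so that the removed set has measure at most $\omega(A)/2$, exactly your choice of $B$. The only cosmetic difference is that for the upper bound in~(i) the paper invokes the outer H\"older inequality (Theorem~\ref{thm:Holder_Lorentz}) in place of your explicit layer-cake split $\int_0^\lambda + \int_\lambda^\infty$; in the classical setting these amount to the same computation.
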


In this article, we are interested in proving an analogous result in the case of the outer $L^{p,\infty}$ quasi-norm of functions on an outer measure space introduced by Do and Thiele in \cite{MR3312633}. We refer to the article of Do and Thiele \cite{MR3312633}, the articles and the Ph.D. thesis of the author \cite{MR4292789,2023Fra,Fraccaroli} for a more detailed introduction to the theory of outer $L^p$ spaces. However, to make this article self-contained, we briefly recall some minimal definitions. 

In particular, we are interested in the case of the outer $L^{p,\infty}$ quasi-norms on a $\sigma$-finite setting $(X,\mu,\omega)$ defined as follows. First, we recall that an \emph{outer measure} $\mu$ on a set $X$ is a function from $\mathcal{P}(X)$, the collection of subsets of $X$, to $[0,\infty]$ satisfying the following properties:
\begin{enumerate} [(i)]
	\item $\mu(\varnothing) = 0$.
	\item For all subsets $A, B \subseteq X$ such that $A \subseteq B$ we have $\mu(A) \leq \mu(B)$.
	\item For every countable collection $\{ A_n \colon n \in \N \} \subseteq \mathcal{P}(X)$ we have
	\begin{equation*}
	\mu \Big( \bigcup_{n \in \N} A_n \Big) \leq \sum_{n \in \N} \mu(A_n).
	\end{equation*}
\end{enumerate}

\begin{definition}[$\sigma$-finite setting]
	Let $X$ be a set, let $\omega$ be a $\sigma$-finite measure on $X$, and let $\mu$ be a $\sigma$-finite outer measure on $X$. Namely, we assume there exist two countable collections $\{ A_n \colon n \in \N \}, \{ B_n \colon n \in \N \} \subseteq \mathcal{P}(X)$ such that
	\begin{gather*}
	X = \bigcup_{n \in \N} A_n = \bigcup_{n \in \N} B_n, \\
	\forall n \in \N, \qquad \qquad \omega(A_n) < \infty, \qquad \mu(B_n) < \infty.
	\end{gather*}
	We assume that for every subset $A \subseteq X$ we have
	\begin{equation*}
	\mu(A) = 0 \Rightarrow \omega(A) = 0.
	\end{equation*} 
	We define $(X,\mu,\omega)$ a \emph{$\sigma$-finite setting}.
\end{definition}

Next, let $\mathcal{A} \subseteq \{ A \in \Sigma \colon \mu(A) \notin \{0, \infty\} \}$ be a collection of measurable subsets such that
\begin{equation*}
X = \bigcup_{A \in \mathcal{A}} A.
\end{equation*}
We recall that a \emph{size} $S = (S,\mathcal{A})$ on the collection $\mathcal{M}$ of $\Sigma$-measurable functions on $X$ is a function from $\mathcal{M} \times \mathcal{A}$ to $[0,\infty]$ satisfying the following properties:
\begin{enumerate} [(i)]
	\item For every $\lambda \in \R$, every function $f \in \mathcal{M}$, and every subset $A \in \mathcal{A}$ we have
	\begin{equation*}
	S( \lambda f)(A) = \abs{\lambda} S(f)(A).
	\end{equation*}
	\item For all functions $f,g \in \mathcal{M}$ such that $\abs{f} \leq \abs{g}$ $\omega$-almost everywhere and every subset $A \in \mathcal{A}$ we have
	\begin{equation*}
	S(f)(A) \leq S(g)(A).
	\end{equation*}
	\item There exists a constant $C \in [1,\infty)$ such that for all functions $f,g \in \mathcal{M}$ and every subset $A \in \mathcal{A}$ we have
	\begin{equation*}
	S(f+g)(A) \leq C [ S(f)(A) + S(g)(A) ].
	\end{equation*} 
\end{enumerate}
In particular, every size $(S,\mathcal{A})$ satisfies the following weaker property.
\begin{enumerate} [(i)]
	\item[(iv)] There exists a constant $K \in [1,\infty)$ such that for every measurable function $f$ on $X$, every subset $A \in \mathcal{A}$, and every measurable subset $B \in \Sigma$ we have
	\begin{equation} \label{eq:K_suport}
	S(f)(A) \leq K [ S(f 1_B)(A) + S(f 1_{B^c})(A) ].
	\end{equation}
	In particular, the constant $K$ is smaller than or equal to the constant $C$ appearing in property~(iii). 
\end{enumerate}
For example, we have the sizes $\ell^\infty_\omega$ and $\ell^r_\omega$ with $r \in (0,\infty)$ on the collection $\mathcal{A} = \{ A \in \Sigma \colon \mu(A) \notin \{0,\infty\} \}$ defined by
\begin{equation} \label{eq:sizes}
\begin{aligned}
\ell^\infty_\omega(f)(A) & \coloneq \norm{f 1_A}_{L^{\infty}(X,\omega)}, \\
\ell^r_\omega(f)(A) & \coloneq \mu(A)^{-\frac{1}{r}} \norm{f 1_A}_{L^r(X,\omega)}.
\end{aligned}
\end{equation}
It is easy to observe that the size $(\ell^1_\omega, \mathcal{A})$ satisfies the inequality in \eqref{eq:K_suport} with constant $K = 1$. In fact, it satisfies the equality.

For every size $S = (S,\mathcal{A})$ we define the outer $L^\infty_\mu(S)$ spaces by the following quasi-norms on functions $f \in \mathcal{M}$
\begin{equation*}
\norm{f}_{L^\infty_\mu(S)} = \norm{f}_{L^{\infty,\infty}_\mu(S)} \coloneq \sup \Big\{ S(f)(A) \colon A \in \mathcal{A} \Big\}.
\end{equation*}
Next, for every $\lambda \in (0,\infty)$ and every function $f \in \mathcal{M}$ we define
\begin{equation*}
\mu(S (f) > \lambda) \coloneq \inf \Big\{ \mu(A) \colon A \in \Sigma, \norm{f 1_{A^c}}_{L^\infty_\mu(S)} \leq \lambda \Big\}.
\end{equation*}
Finally, for all $p \in (0,\infty)$, $q \in (0,\infty]$ we define the outer Lorentz $L^{p,q}_\mu(S)$ spaces by the following quasi-norms on functions $f \in \mathcal{M}$
\begin{equation*}
\norm{f}_{L^{p,q}_\mu(S)} \coloneq p^{\frac{1}{q}} \norm[\Big]{ \lambda \mu(S (f) > \lambda)^{\frac{1}{p}} }_{L^q((0,\infty), \diff \lambda / \lambda)},
\end{equation*}
where the exponent $q^{-1}$ for $q=\infty$ is understood to be $0$. In fact, we can define the same quasi-norms via the decreasing rearrangement function $f^{\ast}$ on $[0,\infty)$ associated with $f$ defined by
\begin{equation*}
f^{\ast} (t) \coloneq \inf \Big\{ \alpha \in [0,\infty) \colon \mu(S(f) > \alpha) \leq t \Big\}, 
\end{equation*}
where the infimum over an empty collection is understood to be $\infty$. In particular, we have
\begin{equation*}
\norm{f}_{L^{p,q}_\mu(S)} = \norm[\Big]{ t^{\frac{1}{p}} f^{\ast}(t) }_{L^q((0,\infty), \diff t / t)}.
\end{equation*}

To state the main result of this article, for every $\sigma$-finite setting $(X,\mu,\omega)$ we introduce the following auxiliary notations. We define the collection $\Sigma_\mu \subseteq \Sigma$ by
\begin{equation*}
\Sigma_\mu \coloneq \Big\{ A \in \Sigma \colon \mu(A) \notin \{0 , \infty \} \Big\},
\end{equation*} 
and for every measurable subset $A \in \Sigma_\mu$ we define the collection $\Sigma'_\mu(A) \subseteq \Sigma$ by
\begin{equation} \label{eq:correct}
\Sigma'_\mu(A) \coloneq \Big\{ B \in \Sigma \colon B \subseteq A, \mu(A \setminus B) \leq \frac{\mu(A)}{2} \Big\}.
\end{equation}
We are ready to state the main theorem of this article. To improve its readability, we omit explicit expressions for the constants. However, in the proofs we keep track of their dependence on the parameters.

\begin{theorem} \label{thm:intro_first}
	For all $a, p, q, r \in (0, \infty]$ there exists a constant $C = C(a,p,q,r)$ such that for every $\sigma$-finite setting $(X,\mu,\omega)$ the following properties hold true.
	
	\begin{enumerate} [(i)]
		\item For every $p > a$ and every function $f \in L^{p,\infty}_\mu(\ell^r_\omega)$ we have
		\begin{equation*}
		\norm{f}_{L^{p,\infty}_\mu(\ell^r_\omega)} \sim_C \sup \Big\{ \mu(A)^{\frac{1}{p} - \frac{1}{a}} \norm{f 1_A}_{L^{a,q}_\mu(\ell^r_\omega)} \colon A \in \Sigma_\mu \Big\}.
		\end{equation*}
		
		\item For every $p \leq a$ and every function $f \in L^{p,\infty}_\mu(\ell^r_\omega)$ we have 
		\begin{align*}
		\norm{f}_{L^{p,\infty}_\mu(\ell^r_\omega)} \sim_C \sup \Big\{ \inf \Big\{ \mu(A)^{\frac{1}{p} -\frac{1}{a}} \norm{f 1_B}_{L^{a,q}_\mu(\ell^r_\omega)} \colon B \in \Sigma'_\mu(A) \Big\} \colon A \in \Sigma_\mu \Big\}.
		\end{align*}
	\end{enumerate}
\end{theorem}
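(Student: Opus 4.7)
Following the template of the proof of Theorem~\ref{thm:base}, the plan is to pass to the outer decreasing rearrangement $f^{\ast}$ and use the identity $\|g\|_{L^{a,q}_\mu(\ell^r_\omega)} = \|t^{1/a} g^{\ast}(t)\|_{L^q((0,\infty),\diff t/t)}$, which reduces both sides to one-dimensional Hardy-type expressions in $f^{\ast}$ and the measures of truncated level sets. Throughout I would set $\alpha = \|f\|_{L^{p,\infty}_\mu(\ell^r_\omega)}$, so that $f^{\ast}(t) \leq \alpha t^{-1/p}$, and pick $t_0 > 0$ with $t_0^{1/p} f^{\ast}(t_0) \geq \alpha/2$. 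Two preparatory facts underlie the estimates. First, a localization lemma: $(f 1_A)^{\ast}(t) \leq f^{\ast}(t)$ and $(f 1_A)^{\ast}(t) = 0$ for $t \geq \mu(A)$, both direct from the monotonicity of the size and from the admissibility of $A$ itself in the infimum defining $\mu(\ell^r_\omega(f 1_A) > 0)$. Second, a super-level-set quasi-triangle inequality
\begin{equation*}
\mu(\ell^r_\omega(f + g) > K'\lambda) \leq \mu(\ell^r_\omega(f) > \lambda) + \mu(\ell^r_\omega(g) > \lambda),
\end{equation*}
with $K'$ depending on the quasi-triangle constant of $\ell^r_\omega$, proved by unioning near-infimizers of $f$ and $g$ at level $\lambda$ and applying the size's quasi-triangle inequality together with the monotonicity.

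\textbf{Upper bounds on the RHS.} For (i), the localization together with $f^{\ast} \leq \alpha t^{-1/p}$ bounds $\|f 1_A\|_{L^{a,q}_\mu(\ell^r_\omega)}$ by $\alpha \|t^{1/a - 1/p} 1_{(0,\mu(A)]}\|_{L^q(\diff t/t)}$, an elementary Hardy integral that converges precisely because $p > a$ and evaluates to $C(a,p,q) \alpha \mu(A)^{1/a - 1/p}$. For (ii), given any $A \in \Sigma_\mu$, I would set $\lambda_{\ast} = 4^{1/p}\alpha \mu(A)^{-1/p}$, pick a near-infimizer $F$ at level $\lambda_{\ast}$ with $\mu(F) \leq 2(\alpha/\lambda_{\ast})^p = \mu(A)/2$, and take $B = A \setminus F \in \Sigma'_\mu(A)$. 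Then $\|f 1_B\|_{L^\infty_\mu(\ell^r_\omega)} \leq \lambda_{\ast}$ and the localization yields $(f 1_B)^{\ast}(t) \leq \min(\lambda_{\ast}, \alpha t^{-1/p})$; splitting the Hardy integral at $t_{\ast} = (\alpha/\lambda_{\ast})^p$ produces $\|f 1_B\|_{L^{a,q}_\mu(\ell^r_\omega)} \lesssim \alpha \mu(A)^{1/a - 1/p}$ regardless of the sign of $1/a - 1/p$.

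\textbf{Lower bounds on the RHS.} The core step is to choose a level $\lambda_0$ so that the corresponding near-infimizer $E$ has $\mu(E)$ comparable to $t_0$. Applying the super-level-set quasi-triangle inequality to $f = f 1_E + f 1_{E^c}$ and using that $\mu(\ell^r_\omega(f 1_{E^c}) > \beta) = 0$ for $\beta > \lambda_0$ yields $(f 1_E)^{\ast}(t) \gtrsim f^{\ast}(t)$ on an interval comparable to $(0, t_0]$. Integrating then gives $\|f 1_E\|_{L^{a,q}_\mu(\ell^r_\omega)} \gtrsim t_0^{1/a} f^{\ast}(t_0)$, so $\mu(E)^{1/p - 1/a}\|f 1_E\|_{L^{a,q}_\mu(\ell^r_\omega)} \gtrsim t_0^{1/p} f^{\ast}(t_0) \gtrsim \alpha$, closing (i). For (ii) direction A, with $A = E$, I would apply the super-level-set quasi-triangle inequality once more to $f 1_E = f 1_B + f 1_{E \setminus B}$ to deduce the shifted comparison $(f 1_B)^{\ast}(t) \gtrsim (f 1_E)^{\ast}(t + \mu(E \setminus B))$; the constraint $\mu(E \setminus B) \leq \mu(E)/2 \sim t_0$ is what ensures that the corresponding Hardy integral still recovers the order $t_0^{1/a} f^{\ast}(t_0)$, uniformly over $B \in \Sigma'_\mu(E)$.

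\textbf{Main obstacle.} I expect the most delicate step to be the calibration of $\lambda_0$ in the lower bound, since it must simultaneously confine $\mu(E)$ to a range on the order of $t_0$: balancing the lower bound $\mu(E) \gtrsim t_0$ required to make $\mu(E)^{1/p - 1/a}$ large enough when $p \leq a$ against the upper bound $\mu(E) \lesssim t_0$ needed so that the shifted rearrangement $(f 1_E)^{\ast}(\,\cdot\, + \mu(E \setminus B))$ in part (ii) direction A is still usefully large on an appreciable interval. The quasi-triangle constant $K'$ compounds across the two applications required for part (ii) direction A, and propagating it carefully is where the explicit dependence of $C(a,p,q,r)$ on the parameters is most intricate. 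Additional bookkeeping is required at the boundary $q = \infty$, where the Hardy integrals become suprema and each rearrangement inequality must be applied at a single well-chosen point, and for strict-vs.-non-strict subtleties in the pseudoinverse definition of $f^{\ast}$ when $\mu(\ell^r_\omega(f) > \lambda)$ is discontinuous in $\lambda$.
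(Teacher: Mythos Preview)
Your upper bounds and the lower bound in~(i) are essentially the paper's argument: your ``super-level-set quasi-triangle inequality'' is precisely Lemma~\ref{thm:lemma_aux} in disguise, and choosing $A=E$ to be a near-infimizer at a level $\lambda_0$ below $f^\ast(t_0)$ is exactly how the paper proves Theorem~\ref{thm:intro_second}. The upper bound in~(ii) via $B=A\setminus\Omega_{\lambda_\ast}$ also matches the paper.

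The genuine gap is in the lower bound of~(ii). Your second application of the quasi-triangle inequality, to $f 1_E = f 1_B + f 1_{E\setminus B}$, gives only $(f 1_B)^\ast(t)\geq (f 1_E)^\ast(t+\mu(E\setminus B))/K'$, and you then need $(f 1_E)^\ast$ to be large on the interval $[\mu(E)/2,\mu(E)]$. But the first comparison only controls $(f 1_E)^\ast(s)$ for $s<\mu(\ell^r_\omega(f)>K'\lambda_0)$, and with a fixed $K'>1$ this threshold can be strictly smaller than $\mu(E)/2$. Concretely, for $f^\ast(t)=t^{-1/p}$ one has $\mu(\ell^r_\omega(f)>K'\lambda_0)=(K')^{-p}\mu(E)$, so the usable range $t<\big((K')^{-p}-\tfrac12\big)\mu(E)$ is empty whenever $K'\geq 2^{1/p}$, i.e.\ for every $p\geq 1$ when $K'=2$. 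The problem is not bookkeeping of constants but the absence of any $L^\infty_\mu(\ell^r_\omega)$ control on $f 1_{E\setminus B}$; your only handle is $\mu(\ell^r_\omega(f1_{E\setminus B})>\lambda)\leq\mu(E\setminus B)$, which is too weak once the $K'$-losses compound.

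The paper circumvents this by a different mechanism. It introduces a second level $M\rho$ with $M\in(1,2^{1/r})$, works inside $\Omega_{M\rho}^c$ where $\|f 1_{\Omega_{M\rho}^c}\|_{L^\infty_\mu(\ell^r_\omega)}\leq M\rho$, and uses an $\ell^r$-specific selection lemma (Lemma~\ref{thm:lemma_1}) to extract a set $B_{\rho_\varepsilon}\subseteq\Omega_{M\rho}^c$ with $\ell^r_\omega(f)(B_{\rho_\varepsilon})>\rho_\varepsilon$. The candidate $A$ is this $B_{\rho_\varepsilon}$, not a near-infimizer. The point is that now, for any $D\in\Sigma'_\mu(B_{\rho_\varepsilon})$, one can compute \emph{directly} in $L^r(X,\omega)$ that $\ell^r_\omega(f)(D)\gtrsim\rho$, because the $L^\infty_\mu(\ell^r_\omega)$ bound $M\rho$ on $B_{\rho_\varepsilon}$ caps the loss $\|f 1_{B_{\rho_\varepsilon}\setminus D}\|_{L^r}^r\leq (M\rho)^r\mu(B_{\rho_\varepsilon})/2$. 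This pointwise $\ell^r$-lower bound on every admissible $D$, coupled with Lemma~\ref{thm:lemma_2}, replaces your shifted-rearrangement step and is where the argument genuinely uses the structure of $\ell^r_\omega$ rather than only abstract quasi-subadditivity.
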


We point out some comments about the statement of Theorem~\ref{thm:intro_first}. First, for $p \neq \infty$ we can generalize property~$(i)$ to the case of arbitrary sizes, see Theorem~\ref{thm:intro_second}. Moreover, for $p = \infty$ we have a slight generalization of property~$(i)$ too, see Remark~\ref{rmk:suff}. In particular, the arguments used to prove Theorem~\ref{thm:intro_second} are different from those appearing in a previous article of the author \cite{MR4292789}, where we characterized the outer $L^p_\mu(\ell^r_\omega)$ quasi-norm in terms of the outer $L^1_\mu(\ell^1_\omega)$ pairing with arbitrary functions with normalized outer $L^{p'}_\mu(\ell^{r'}_\omega)$ quasi-norm. The difference is due to the fact that in Theorem~\ref{thm:intro_first} and Theorem~\ref{thm:intro_second} we have the same size $S$ both in the outer $L^{p,\infty}(S)$ quasi-norm to be characterized and the outer $L^{a,q}(S)$ quasi-norm used to measure the pairing. Therefore, we rely only on the quasi-subadditivity of sizes, namely property~(iv) of sizes, to compare the super level measures
\begin{equation*}
\mu(S(f) > \rho), \qquad \qquad \mu(S(f 1_A) > \lambda),
\end{equation*}
where $A \in \Sigma$ is an appropriate measurable subset and $\rho, \lambda \in (0,\infty)$.

Next, we compare the collections $\Sigma''_\omega(A)$ and $\Sigma'_\mu(A)$ appearing in the statement of property~$(ii)$ in Theorem~\ref{thm:base} and Theorem~\ref{thm:intro_first} respectively. By the additivity on disjoint sets of the measure $\omega$, in the case of the collection $\Sigma''_\omega(A)$ we have the equality
\begin{equation*}
\Sigma''_\omega(A) = \Sigma'_\omega(A) \coloneq \Big\{ B \in \Sigma \colon B \subseteq A, \omega(A \setminus B) \leq \frac{\omega(A)}{2} \Big\} .
\end{equation*}
However, in the case of the collection $\Sigma'_\mu(A)$ in general we only have the inclusion
\begin{equation*}
\Sigma'_\mu(A) \subseteq \Sigma''_\mu(A) \coloneq \Big\{ B \in \Sigma \colon B \subseteq A, \mu(B) \geq \frac{\mu(A)}{2} \Big\}.
\end{equation*} 
In fact, in the characterization of the outer $L^{p,\infty}$ quasi-norm stated in property~$(ii)$ in Theorem~\ref{thm:intro_first} we cannot replace $\Sigma'_\mu(A)$ with $\Sigma''_\mu(A)$. We clarify this claim in Remark~\ref{rmk:rmk_2}.

Finally, the interest in the characterization of outer $L^{p,\infty}$ quasi-norms by pairing with normalized characteristic functions was sparked by the function spaces appearing in Subsection~3.8 in the article of Di Plinio and Fragkos \cite{2022arXiv220408051D}. The latter ones, denoted by $X^{p,q}_a(J,\kappa,\mathsf{size}_{2,\star})$, are quasi-normed spaces of functions on the collection $\mathcal{H}$ of dyadic Heisenberg tiles in $\R^2$. In particular, the set $\mathcal{H}$ is the collection of dyadic rectangles of area $1$ in $\R^2$. The spaces introduced by Di Plinio and Fragkos are associated with new implicitly defined quasi-norms. More specifically, the new quasi-norm of a function $f$ is defined in terms of the pairing of $f$ with appropriately normalized characteristic functions. In particular, both the pairing and the renormalization of the characteristic functions are measured in terms of outer Lorentz $L^{p,q}$ spaces with respect to an appropriate size. As a corollary of Theorem~\ref{thm:intro_second}, we show that the $X^{p,q}_a(J,\kappa,\mathsf{size}_{2,\star})$ quasi-norms are equivalent to explicit outer $L^{p,\infty}$ ones with respect to an appropriate size, see Corollary~\ref{thm:corollary}. We provide all the definitions to state this equivalence in Section~\ref{sec:corollary}. Here we briefly point out that the sizes $(S^r,\mathcal{A})$ with $r \in [1,\infty)$ appearing in the article of Di Plinio and Fragkos differ from those defined in \eqref{eq:sizes} in the following way, namely for every subset $A \in \mathcal{A}$ we have 
\begin{equation*}
S^r(f)(A) \coloneq \sup \{ \ell^r_\omega(f)(\widetilde{A}) \colon \widetilde{A} \in \widetilde{\mathcal{A}} \},
\end{equation*}
where $\widetilde{\mathcal{A}}$ is an appropriate collection of strict subsets of $A$. The size $(S^1,\mathcal{A})$ satisfies the inequality in \eqref{eq:K_suport} with constant $K = 1$, but it does not satisfy the equality. This difference makes it more difficult to use decomposition arguments akin to those for the size $\ell^1_\omega$ appearing in the previous article of the author \cite{MR4292789}.

We briefly comment about the use of the spaces $X^{p,q}_a(J,\kappa,\mathsf{size}_{2,\star})$ in the article of Di Plinio and Fragkos \cite{2022arXiv220408051D}. In \cite{2022arXiv220408051D} the main focus of the authors is about improving the $L^p$ estimates for the Carleson maximal operator $\mathcal{C}$ defined for every Schwartz function $f \in \mathcal{S}(\R)$ and every $x \in \R$ by
\begin{equation*}
\mathcal{C} f (x) \coloneq \sup \Big\{ \abs[\Big]{\int_{\xi \leq N} \widehat{f}(\xi) e^{i x \xi} \diff \xi }  \colon N \in \R \Big\}.
\end{equation*}
In particular, Di Plinio and Fragkos are interested in the case when the exponent is close to the endpoint case $p=1$. The previously known best estimate was a restricted weak-type $L^p$ one of the following form. For every $p \in (1,\infty)$, every subset $\abs{F} \subseteq \R$, and every measurable function $f$ such that $\abs{f} \leq 1_F$ we have
\begin{equation*}
\norm{ \mathcal{C} f }_{L^{p,\infty} (\R)} \leq \frac{C p^2}{p-1} \abs{F}^{\frac{1}{p}},
\end{equation*}
where the constant $C$ is independent of $p$. This bound was due to the work of Hunt \cite{MR0238019} building on the seminal article of Carleson \cite{MR199631}. An equivalent estimate was later proved by Lacey and Thiele in \cite{MR1783613} in the language of generalized restricted weak-type $L^p$ bounds.

The improvement obtained by Di Plinio and Fragkos is a proof of a weak-type $L^p$ estimate of the following form. For every $p \in (1,2]$ and every Schwartz function $f \in \mathcal{S}(\R)$ we have
\begin{equation*}
\norm{ \mathcal{C} f }_{L^{p,\infty} (\R)} \leq \frac{C}{p-1} \norm{f}_{L^{p} (\R)},
\end{equation*}
where the constant $C$ is independent of $p$. This result (Theorem~A in \cite{2022arXiv220408051D}) follows from a sparse norm bound for the Carleson maximal operator (Theorem~B in \cite{2022arXiv220408051D}). In fact, Di Plinio and Fragkos prove a sparse norm bound for the Carleson maximal operator associated with H\"{o}rmander-Mihlin multipliers. We refer to the article of Di Plinio and Fragkos for both the definition of a sparse norm bound and the description of the family of weighted estimates for the operator implied by the sparse norm bound.

The proof of the sparse norm bound follows from a standard stopping forms argument and two inequalities involving the spaces $X^{p,q}_a(J,\kappa,\mathsf{size}_{2,\star})$. The role played by the two inequalities is analogous to the two-step programme outlined in the article of Do and Thiele \cite{MR3312633}. First, a version of H\"{o}lder's inequality for outer $L^p$ spaces. Next, the boundedness of the Carleson embedding map from classical to outer $L^{p}$ spaces.  

In fact, the first inequality is a generalized outer H\"{o}lder's inequality for outer Lorentz $L^{p,q}$ spaces on the $\sigma$-finite setting $(E(J),\mu_\kappa,\omega_\kappa)$ (Proposition~3.9 in \cite{2022arXiv220408051D}). In particular, for all $ a, p_1 \in (1,\infty)$, $ a \leq p_1$ and $p_2, \dots, p_m \in [1,\infty)$ such that
\begin{equation*}
\varepsilon \coloneq \Big( \sum_{j=1}^m \frac{1}{p_j} \Big) - 1 > 0,
\end{equation*} 
we have
\begin{equation} \label{eq:Thm3.9}
\begin{aligned}
\norm[\Big]{\prod_{j=1}^m F_j}_{L^1(E(J),\omega_\kappa)} \leq \frac{C a}{\varepsilon (a-1)} & \norm{F_1}_{X^{p_1,\infty}_a(J,\kappa,\mathsf{size}_{2,\star})} \times  \\
& \times \prod_{j=2}^m \max \{ \norm{F}_{L^{p_j,\infty}_{\mu_\kappa}(s_j)}, \norm{F}_{L^{\infty}_{\mu_\kappa}(s_j)} \},
\end{aligned}
\end{equation}
where the sizes $\mathsf{size}_{2,\star}, s_2, \dots, s_m$ satisfy an appropriate condition. We provide all the necessary definitions in Section~\ref{sec:corollary}. In view of the equivalence stated in Corollary~\ref{thm:corollary} we obtain Proposition~3.9 in \cite{2022arXiv220408051D} as corollary of standard interpolation results in the theory of outer $L^p$ spaces, see Appendix~\ref{sec:appendix}. 

The second inequality is the boundedness of the Carleson embedding map $W$ measured in the $X^{tp',\infty}_2(J,\kappa,\mathsf{size}_{2,\star})$ spaces with a constant independent of $p$ (Theorem~D in \cite{2022arXiv220408051D}). For every Schwartz function $f \in \mathcal{S}(\R)$ the embedded function $W[f] \colon \mathcal{H} \to [0,\infty)$ is defined on every $H = I_H \times J_H  \in \mathcal{H}$ by
\begin{equation*}
W[f](H) \coloneq \sup \Big\{ \abs{ \langle f, \phi \rangle } \colon \phi \in \Phi(H) \Big\},
\end{equation*}
where $\Phi(H)$ is an appropriate collection of $L^1$-normalized smooth enough functions adapted to $I_H$ and such that $\supp \widehat{\phi} \subseteq J_H$. Then, for all $t \in (1,\infty)$, $p \in (1,2]$, every dyadic interval $J$, every collection $\mathcal{P} \subseteq \mathcal{H}$ of dyadic Heisenberg tiles, and every Schwartz function $f \in \mathcal{S}(\R)$ we have
\begin{equation} \label{eq:ThmD}
\norm{W[f] 1_{\mathcal{P}}}_{X^{tp',\infty}_2(J,\kappa,\mathsf{size}_{2,\star})} \leq C(t,\kappa) [f]_{p,\mathcal{P}}.
\end{equation}
In the previous display, the quasi-norm $[f]_{p,\mathcal{P}}$ measures the minimal value on $I_H$ for every $H \in \mathcal{P}$ of the maximal $L^p$ average of $f$. In view of the equivalence stated in Corollary~\ref{thm:corollary} the estimate in \eqref{eq:ThmD} is a weak version of the boundedness of the Carleson embedding map $W$, namely
\begin{equation*}
\norm{W[f] 1_{\mathcal{P}}}_{L^{tp',\infty}_{\mu_\kappa}(s_\kappa)} \leq C(t,\kappa) [f]_{p,\mathcal{P}},
\end{equation*}
for an appropriate size $s_\kappa$. It should be compared with two previously known results. First, the weak-type estimate for the Carleson embedding map $W$ proved by Do and Thiele in \cite{MR3312633}, namely
\begin{equation*}
\norm{W[f] 1_{\mathcal{P}}}_{L^{2,\infty}_{\mu_\kappa}(s_\kappa)} \leq C(\kappa) [f]_{2,\mathcal{P}}.
\end{equation*}
Next, the strong-type estimate for the Carleson embedding map $W$ applied to the localization of the function $f$ proved by Di Plinio and Ou in \cite{MR3829613}, namely
\begin{equation*}
\norm{W[f 1_{3J}] 1_{\mathcal{P}}}_{L^{tp'}_{\mu_\kappa}(s_\kappa)} \leq C(t,\kappa,p) [f]_{p,\mathcal{P}}.
\end{equation*}

\subsection*{Guide to the article}
In Section~\ref{sec:preliminaries} we introduce additional definitions, we make some auxiliary observations, and in Theorem~\ref{thm:intro_second} we generalize property~$(i)$ in Theorem~\ref{thm:intro_first} to the case of arbitrary sizes. In Section~\ref{sec:first_proof} we prove Theorem~\ref{thm:intro_second}. After that, in Section~\ref{sec:other_proof} we prove the remaining properties stated in Theorem~\ref{thm:intro_first}. Finally, in Section~\ref{sec:corollary} we introduce the setting on the collection $\mathcal{H}$ of dyadic Heisenberg tiles in $\R^2$ and all the relevant notions. Moreover, we state and prove the result about the spaces introduced by Di Plinio and Fragkos in Corollary~\ref{thm:corollary}. In view of this result, we detail the alternative proof of Proposition~3.9 in \cite{2022arXiv220408051D} in Appendix~\ref{sec:appendix}. In Appendix~\ref{sec:appendix_2} we provide the proofs of the auxiliary results stated in Section~\ref{sec:other_proof}. 

\section*{Acknowledgements}
The author gratefully acknowledges financial support by the CRC~1060 \emph{The Mathematics of Emergent Effects} at the University of Bonn, funded through the Deutsche Forschungsgemeinschaft. 

The work was initiated during the HIM Trimester Program \emph{Interactions between Geometric measure theory, Singular integrals, and PDE} held at the Hausdorff Research Institute for Mathematics in Bonn during the first quarter of 2022. The author gratefully acknowledges the organizers of the program, as well as the academic and supporting staff of the Institute for their hospitality.

The author is thankful to Francesco Di Plinio, Anastasios Fragkos, and Christoph Thiele for helpful discussions, comments and suggestions that greatly improved the exposition of the material.

\section{Preliminaries} \label{sec:preliminaries}

\subsection*{Auxiliary definitions and generalization of Theorem~\ref{thm:intro_first}}
Let $(X,\mu,\omega)$ be a $\sigma$-finite setting. First, for every measurable subset $A \in \Sigma$ we define the collection $\Sigma_\mu(A) \subseteq \Sigma$ by
\begin{equation*}
\Sigma_\mu(A) \coloneq \Big\{ B \in \Sigma \colon B \subseteq A, \mu(B) \notin \{0 , \infty \} \Big\},
\end{equation*}
the collection $\Sigma^{\circ}_{\omega}(A) \subseteq \Sigma$ by
\begin{equation*}
\Sigma^{\circ}_{\omega}(A) \coloneq \Big\{ B \in \Sigma \colon B \subseteq A, \omega(A \setminus B) = 0 \Big\},
\end{equation*}
and the value $\mu^{\circ}(A) \in [0,\infty]$ by
\begin{equation*}
\mu^{\circ}(A) \coloneq \inf \Big\{ \mu(B) \colon B \in \Sigma^{\circ}_\omega(A) \Big\}.
\end{equation*}

Next, for every $r \in (0,\infty)$ and every size $(S,\mathcal{A})$ we define the sizes $(S_r,\mathcal{A})$, $(S_\infty,\mathcal{A})$ as follows. For every $A \in \mathcal{A}$ and every measurable function $f$ on $X$ we define
\begin{align} \label{eq:size_r}
S_r(f)(A) & \coloneq (S(f^r)(A))^{\frac{1}{r}}, \\  \label{eq:size_infty}
S_\infty(f)(A) & \coloneq \lim_{r \to \infty} (S(f^r)(A))^{\frac{1}{r}}.
\end{align}
It is easy to observe that
\begin{equation*}
S_\infty(f)(A) = \norm{f 1_A}_{L^\infty(X,\omega)},
\end{equation*}
hence for all $p,q \in (0,\infty]$ and every measurable function $f$ on $X$ we have
\begin{equation*}
\norm{f}_{L^{p,q}_\mu(S_\infty)} = \norm{f}_{L^{p,q}_\mu(\ell^\infty_\omega)}.
\end{equation*}
Moreover, it is easy to observe that for every $r \in (0,\infty]$ we have
\begin{equation*}
(\ell^r_\omega, \Sigma_\mu) = ((\ell^1_\omega)_r, \Sigma_\mu).
\end{equation*}

Finally, we state a generalization of property~$(i)$ in Theorem~\ref{thm:intro_first} to the case of arbitrary sizes.
\begin{theorem} \label{thm:intro_second}
	For all $a, q, r \in (0, \infty]$, every $p \in (0,\infty)$, and every $K \in [1,\infty)$ there exists a constant $C = C(a,p,q,r,K)$ such that for every $\sigma$-finite setting $(X,\mu,\omega)$  and every size $(S,\mathcal{A})$ on $X$ satisfying the condition in \eqref{eq:K_suport} with constant $K$ the following property holds true.	
	\begin{enumerate} [(i)]
		\item For every $p > a$, $p \neq \infty$ and every function $f \in L^{p,\infty}_\mu(S_r)$ we have
		\begin{equation*}
		\norm{f}_{L^{p,\infty}_\mu(S_r)} \sim_C \sup \Big\{ \mu(A)^{\frac{1}{p} - \frac{1}{a}} \norm{f 1_A}_{L^{a,q}_\mu(S_r)} \colon A \in \Sigma_\mu \Big\}.
		\end{equation*}
	\end{enumerate}
\end{theorem}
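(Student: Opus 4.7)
The plan is to prove the two inequalities separately.

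For the easy bound $\sup_A \mu(A)^{1/p-1/a}\|f 1_A\|_{L^{a,q}_\mu(S_r)} \leq C \|f\|_{L^{p,\infty}_\mu(S_r)}$, I would argue via the decreasing rearrangement. For any $A \in \Sigma_\mu$, choosing $E = A$ in the infimum defining $\mu(S_r(f 1_A) > 0)$ and using that $f 1_A 1_{A^c} = 0$ together with property~(i) of sizes yields $\mu(S_r(f 1_A) > 0) \leq \mu(A)$, so that the rearrangement $(f 1_A)^{\ast}$ vanishes on $(\mu(A), \infty)$. Combined with the pointwise bound $(f 1_A)^{\ast}(t) \leq t^{-1/p}\|f\|_{L^{p,\infty}_\mu(S_r)}$ and the rearrangement formula for the Lorentz quasi-norm, the hypothesis $a < p$ makes the resulting integral (or supremum, when $q = \infty$) convergent and produces the factor $\mu(A)^{1/a - 1/p}$, giving the bound.

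For the reverse inequality, write $M$ for the right-hand side of the equivalence and $\phi(\lambda) \coloneq \mu(S_r(f) > \lambda)$; the hypothesis $f \in L^{p,\infty}_\mu(S_r)$ ensures $\phi(\lambda) < \infty$ for every $\lambda > 0$. The first step is to transfer property~(iv) from $S$ to $S_r$: using the disjoint-support identity $|f|^r = |f 1_B|^r + |f 1_{B^c}|^r$ and controlling $[u^r + v^r]^{1/r}$ against $u + v$ via power-mean inequalities, one obtains that $S_r$ satisfies property~(iv) with some constant $K_r = K_r(K,r)$ (and $K_\infty = 1$). Fix $\lambda > 0$ and choose $A \in \Sigma_\mu$ with $\|f 1_{A^c}\|_{L^\infty_\mu(S_r)} \leq \lambda/(2K_r)$ and $\mu(A) \leq 2\phi(\lambda/(2K_r))$. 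Splitting $f 1_{G^c} = f 1_A 1_{G^c} + f 1_{A^c} 1_{G^c}$ via property~(iv) of $S_r$ shows that any $G$ witnessing $\mu(S_r(f 1_A) > \lambda/(2K_r))$ up to a factor also witnesses $\|f 1_{G^c}\|_{L^\infty_\mu(S_r)} \leq \lambda$, so that
\[
\phi(\lambda) \leq \mu(S_r(f 1_A) > \lambda/(2K_r)).
\]
Combining this with the embedding $L^{a,q}_\mu(S_r) \hookrightarrow L^{a,\infty}_\mu(S_r)$, the pairing bound $\mu(A)^{1/p - 1/a}\|f 1_A\|_{L^{a,q}_\mu(S_r)} \leq M$, and the inequality $\mu(A) \leq 2\phi(\lambda/(2K_r))$ (with exponent $1/p - 1/a < 0$) leads to
\[
\lambda\,\phi(\lambda)^{1/a}\,\phi(\lambda/(2K_r))^{1/p-1/a} \leq C M.
\]

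The last step is a self-improvement bootstrap. Setting $h(\lambda) \coloneq \lambda^p \phi(\lambda)$, the previous display rearranges to $h(\lambda) \leq C' M^a\, h(\lambda/(2K_r))^{1 - a/p}$ for every $\lambda > 0$. Since $\sup_\lambda h(\lambda) = \|f\|_{L^{p,\infty}_\mu(S_r)}^p$ is finite by hypothesis and $1 - a/p < 1$, taking suprema on both sides and absorbing yields $\|f\|_{L^{p,\infty}_\mu(S_r)} \leq C'' M$ with $C''$ depending on $a, p, q, r, K$. I expect the main obstacle to be the bookkeeping in the passage from $f$ to $f 1_A$: quasi-subadditivity forces a loss by a factor of $2K_r$ in the level $\lambda$, placing $\phi(\lambda)$ and $\phi(\lambda/(2K_r))$ on opposite sides of the estimate and thereby requiring the a~priori finiteness of $\|f\|_{L^{p,\infty}_\mu(S_r)}$ to close via the bootstrap rather than a direct comparison.
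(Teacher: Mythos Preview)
Your proposal is correct; both directions go through as you describe. The argument, however, differs from the paper's in a couple of respects worth noting.

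For the easy direction, the paper invokes the outer H\"older inequality for Lorentz spaces (Theorem~\ref{thm:Holder_Lorentz}) together with the identity \eqref{eq:first_equation} for $\norm{1_A}_{L^{p,q}_\mu(\ell^\infty_\omega)}$, whereas you compute directly with the rearrangement $(f 1_A)^\ast$. Your route is more self-contained and avoids appealing to a black-box H\"older inequality; the paper's route gives a slightly cleaner constant.

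For the hard direction, both arguments rest on the same super-level transfer: if $\norm{f 1_{A^c}}_{L^\infty_\mu(S_r)}$ is small enough, then $\mu(S_r(f)>\rho)$ is controlled by $\mu(S_r(f 1_A)>c\rho)$. This is exactly the content of the paper's Lemma~\ref{thm:lemma_aux}, which you rederive with a crude splitting constant $2K_r$ in place of the paper's optimized $\delta$-parameter. The genuine divergence comes after this step. The paper fixes a level $\rho$ that (nearly) realizes the supremum in $\norm{f}_{L^{p,\infty}_\mu(S_r)}=\sup_\lambda \lambda\,\phi(\lambda)^{1/p}$; extremality of $\rho$ then forces $\phi((1-\delta^r)^{1/r}\rho)\le (1-\delta^r)^{-p/r}\phi(\rho)$, so the two level-set measures become directly comparable and the estimate closes in one step with an explicit constant. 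You instead work at an arbitrary level $\lambda$, obtain an inequality mixing $\phi(\lambda)$ and $\phi(\lambda/(2K_r))$, and close by the absorption argument $H\le C M^a H^{1-a/p}$ with $H=\norm{f}_{L^{p,\infty}_\mu(S_r)}^p<\infty$. Your approach avoids the paper's separate approximation step for when the supremum is not attained, at the cost of less explicit constants and the need to invoke the a~priori finiteness of $H$ (which is, of course, part of the hypothesis).
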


\subsection*{Auxiliary observations}
Let $(X,\mu,\omega)$ be a $\sigma$-finite setting. First, we observe that for all $p \in (0,\infty)$, $q \in (0,\infty]$ and every measurable subset $A \in \Sigma$ we have 
\begin{equation} \label{eq:first_equation}
\norm{1_A}_{L^{p,q}_\mu(\ell^\infty_\omega)} = p^{\frac{1}{q}} \norm[\Big]{ \lambda \mu ( \ell^\infty_\omega ( 1_A ) > \lambda )^{\frac{1}{p}}  }_{L^q ( (0, \infty), \diff \lambda / \lambda )} = \Big( \frac{p}{q} \Big)^{\frac{1}{q}} \mu^{\circ}(A)^{\frac{1}{p}}, 
\end{equation}
where $(p/q)^{\frac{1}{q}}$ for $q = \infty$ is understood to be $1$.

Next, we state and prove the following auxiliary result.

\begin{lemma} \label{thm:lemma_aux}
	Let $(X,\mu,\omega)$ be a $\sigma$-finite setting and let $(S,\mathcal{A})$ be a size on $X$ satisfying the condition in \eqref{eq:K_suport} with constant $K \in [1,\infty)$.
	
	For every $r \in (0,\infty)$, every $\rho \in (0,\infty)$, every $\delta \in [0,1]$, every measurable function $f$ on $X$, and every measurable subset $\Omega \in \Sigma$ such that
	\begin{equation} \label{eq:size_condition}
	\norm{f 1_{\Omega^c}}_{L^\infty_\mu(S_r)} \leq K^{-\frac{1}{r}} (1 - \delta^r)^{\frac{1}{r}} \rho,
	\end{equation}
	we have
	\begin{equation*} 
	\mu( S_r(f) > \rho ) \leq \mu \Big( S_r(f 1_{\Omega}) > K^{-\frac{1}{r}} \delta \rho \Big).
	\end{equation*}
\end{lemma}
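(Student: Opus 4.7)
My approach is to run a matching game between the two infima defining the super level measures. Recall that for any function $g$ and any $\lambda > 0$ we have
\begin{equation*}
\mu(S_r(g) > \lambda) = \inf \bigl\{ \mu(A) \colon \norm{g 1_{A^c}}_{L^\infty_\mu(S_r)} \leq \lambda \bigr\}.
\end{equation*}
I would pick an arbitrary competitor $A \in \Sigma$ for the right-hand side of the claimed inequality, namely one satisfying $\norm{f 1_\Omega 1_{A^c}}_{L^\infty_\mu(S_r)} \leq K^{-1/r}\delta\rho$, and then show that the same $A$ is also a competitor for the left-hand side, i.e.\ that $\norm{f 1_{A^c}}_{L^\infty_\mu(S_r)} \leq \rho$. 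Passing to the infimum over such $A$ delivers the inequality.

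The workhorse is the quasi-subadditivity of the underlying size $S$ with respect to support decompositions, recorded in \eqref{eq:K_suport}. Since $S_r$ is built from $S(\cdot^r)^{1/r}$ while \eqref{eq:K_suport} is stated for $S$, I first pass to the $r$-th power. For any $\widetilde{A} \in \mathcal{A}$, the partition $X = \Omega \sqcup \Omega^c$ together with \eqref{eq:K_suport} applied to the function $\abs{f 1_{A^c}}^r$ with the decomposing set $B = \Omega$ gives
\begin{equation*}
S_r(f 1_{A^c})(\widetilde{A})^r = S(\abs{f 1_{A^c}}^r)(\widetilde{A}) \leq K \bigl[ S_r(f 1_{A^c} 1_\Omega)(\widetilde{A})^r + S_r(f 1_{A^c} 1_{\Omega^c})(\widetilde{A})^r \bigr].
\end{equation*}
By monotonicity of the size, the first summand is bounded by $K \cdot (K^{-1/r} \delta \rho)^r = \delta^r \rho^r$ using the hypothesis on $A$, and the second by $K \cdot (K^{-1/r} (1 - \delta^r)^{1/r} \rho)^r = (1 - \delta^r) \rho^r$ using the hypothesis \eqref{eq:size_condition}. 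Summing and taking the $r$-th root yields $S_r(f 1_{A^c})(\widetilde{A}) \leq \rho$; passing to the supremum over $\widetilde{A} \in \mathcal{A}$ gives $\norm{f 1_{A^c}}_{L^\infty_\mu(S_r)} \leq \rho$, as desired.

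There is no real obstacle here: the content of the lemma is precisely that the coefficients $K^{-1/r} \delta$ and $K^{-1/r} (1 - \delta^r)^{1/r}$ are calibrated so that after invoking \eqref{eq:K_suport} at the scale of the $r$-th power of the size, the two contributions combine exactly to $\rho^r$ via the elementary identity $\delta^r + (1 - \delta^r) = 1$. The only subtlety worth flagging is that \eqref{eq:K_suport} is a property of $S$, not of $S_r$, which is exactly the reason for the exponent $1/r$ appearing in the hypothesis.
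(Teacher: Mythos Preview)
Your proof is correct and is essentially the same argument as the paper's, just packaged directly rather than by contradiction: the paper assumes the inequality fails, extracts a subset $\Theta$ (your competitor $A$) with small outer measure and small $S_r$ on $f 1_\Omega 1_{\Theta^c}$, then uses \eqref{eq:K_suport} on the decomposition $\Omega \sqcup \Omega^c$ at the level of $S(\,\cdot\,^r)$ to force $S_r(f 1_{\Omega^c})$ to be too large, contradicting \eqref{eq:size_condition}. Your direct version is arguably cleaner; one minor remark is that ``monotonicity of the size'' is only needed for the second summand (to drop the factor $1_{A^c}$ from $f 1_{A^c} 1_{\Omega^c}$), while the first summand is bounded directly by the definition of the $L^\infty_\mu(S_r)$ norm via the hypothesis on $A$.
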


\begin{proof}
	We argue by contradiction and we assume that
	\begin{equation*}
	\mu( S_r(f) > \rho ) > \mu \Big( S_r(f 1_{\Omega}) > K^{-\frac{1}{r}} \delta \rho \Big).
	\end{equation*}
	In particular, there exists a measurable subset $\Theta \in \Sigma$, $\Theta \subseteq \Omega$ such that
	\begin{equation*}
	\norm{f 1_{\Omega} 1_{\Theta^c}}_{L^\infty_\mu(S_r)} \leq K^{- \frac{1}{r}} \delta \rho, \qquad \qquad \mu(\Theta) < \mu(S_r(f) > \rho). 
	\end{equation*}
	Therefore, there exists a subset $A \in \mathcal{A}$ such that
	\begin{align*}
	S (f^r 1_{\Omega} 1_{\Theta^c} ) (A) & = S_r (f 1_{\Omega} 1_{\Theta^c} ) (A)^r \leq K^{-1} \delta^r \rho^r , \\
	S (f^r 1_{\Theta^c}) (A) & = S_r (f 1_{\Theta^c}) (A)^r > \rho^r.
	\end{align*}
	By the inequality in \eqref{eq:K_suport}, we have
	\begin{equation*}
	S_r( f 1_{\Omega^c} )(A)^r = S(f^r 1_{\Omega^c} )(A) > K^{-1} (1 - \delta^r) \rho^r .
	\end{equation*}
	Together with the condition in \eqref{eq:size_condition}, the inequality in the previous display yields a contradiction.
\end{proof}

\section{Proof of Theorem~\ref{thm:intro_second}} \label{sec:first_proof}

We prove Theorem~\ref{thm:intro_second} under the assumption $K = 1$. For arbitrary $K \in [1,\infty)$ we apply the same arguments only obtaining a different constant $C$.

Moreover, we make some additional assumptions. At the end of the proof we comment on the modifications needed to drop them.

\begin{itemize}
	\item For every arbitrary function $f \in L^{p,\infty}_\mu(S_r)$ and every $p \in (0,\infty)$ we assume that there exists $\rho \in (0,\infty)$ such that
	\begin{equation} \label{eq:add_1}
	\norm{f}_{L^{p,\infty}_\mu(S_r)} = \rho \mu( S_r (f) > \rho )^{\frac{1}{p}} \in (0,\infty).
	\end{equation}

	\item For every arbitrary $\lambda \in (0,\infty)$ we assume that there exists a measurable subset $\Omega_\lambda \in \Sigma$ such that
	\begin{equation} \label{eq:add_2}
	\norm{f 1_{\Omega^c_\lambda}}_{L^{\infty}_\mu(S_r)} \leq \lambda, \qquad \qquad \mu(\Omega_\lambda) = \mu( S_r (f) > \lambda ) \in [0,\infty).
	\end{equation}
	
	\item For every arbitrary measurable subset $A \in \Sigma$ we assume that there exists a measurable subset $B \in \Sigma^{\circ}_\omega(A)$ such that
	\begin{equation} \label{eq:add_3}
	\mu(B) = \mu^{\circ}(A) = \inf \Big\{ \mu(D) \colon D \in \Sigma^{\circ}_\omega(A) \Big\}.
	\end{equation}

\item We make the additional assumption that $a = 1$ and we define $p' \in (-\infty,0) \cup [1, \infty] $ by 
\begin{equation*}
1 = \frac{1}{p} + \frac{1}{p'}.
\end{equation*}
Therefore, for $p \in [1,\infty]$ we have $p' \in [1,\infty]$ and for $p \in (0,1)$ we have $p' \in (-\infty,0)$.
\end{itemize}

\begin{proof} [Proof of Theorem~\ref{thm:intro_second}]
	Without loss of generality we assume $f \nequiv 0$, otherwise the property is trivially satisfied.
	
	For every $p \in (1,\infty]$ we have $p' \neq \infty$. Therefore, for every $q \in (0,\infty]$ the inequality
	\begin{equation*}
	\sup \Big\{ \mu(A)^{- \frac{1}{p'}} \norm{f 1_A}_{L^{1,q}_\mu(S_r)} \colon A \in \Sigma_\mu \Big\} \leq 2 \Big( \frac{p'}{q} \Big)^{\frac{1}{q}} \norm{f}_{L^{p,\infty}_\mu(S_r)} ,
	\end{equation*}
	where $(p'/q)^{\frac{1}{q}}$ for $q = \infty$ is understood to be $1$, follows from outer H\"{o}lder's inequality for outer Lorentz $L^{p,q}$ spaces (Theorem~\ref{thm:Holder_Lorentz} in the Appendix) and the equality in \eqref{eq:first_equation}. 
	
	We turn to the proof of the inequality
	\begin{equation*}
	\norm{f}_{L^{p,\infty}_\mu(S_r)} \leq C \sup \Big\{ \mu(A)^{- \frac{1}{p'}} \norm{f 1_A}_{L^{1,q}_\mu(S_r)} \colon A \in \Sigma_\mu \Big\} .
	\end{equation*} 
	We distinguish two cases.
	
	{\textbf{Case I: $r= \infty$.}} Without loss of generality, we assume
	\begin{equation*}
	\Omega_\rho \subseteq \Big\{ x \in X \colon \abs{f(x)} > \rho \Big\}.
	\end{equation*}
	By the inclusion in the previous display and the equality in \eqref{eq:first_equation}, for every $q \in (0,\infty]$ we have
	\begin{equation*}
	\norm{f 1_{\Omega_{\rho} }}_{L^{1,q}_\mu(\ell^\infty_\omega)}  \geq \norm{\rho 1_{\Omega_{\rho}}}_{L^{1,q}_\mu(\ell^\infty_\omega)} = q^{-\frac{1}{q}} \rho \mu( \Omega_{\rho} )  ,
	\end{equation*}
	where $q^{\frac{1}{q}}$ for $q = \infty$ is understood to be $1$. Together with the assumptions in \eqref{eq:add_1}--\eqref{eq:add_3}, the previous inequality yields the desired inequality.

	{\bf{Case II: $r \neq \infty$.}} By Lemma~\ref{thm:lemma_aux}, for every $\delta \in [0,1]$ we have
	\begin{equation*} 
	\mu( S_r(f) > \rho ) \leq \mu \Big( S_r(f 1_{\Omega_{(1-\delta^r)^{1/r} \rho}}) > \delta \rho \Big).
	\end{equation*}
	By the assumption on $\rho$ in \eqref{eq:add_1}, we have
	\begin{equation*}
	 \mu(\Omega_{(1-\delta^r)^{1/r} \rho}) = \mu\Big( S_r(f) > (1-\delta^r)^{\frac{1}{r}} \rho \Big) \leq (1-\delta^r)^{-\frac{p}{r}} \mu( S_r(f) > \rho).
	\end{equation*}
	By the inequalities in the previous two displays, for every $q \in (0,\infty]$ we have
	\begin{equation*}
	\norm{f 1_{\Omega_{(1-\delta^r)^{1/r} \rho}} }_{L^{1,q}_{\mu}(S_r)} \geq \delta (1- \delta^r)^{\frac{p}{r p'}} \mu(\Omega_{(1-\delta^r)^{1/r} \rho})^{\frac{1}{p'}} \rho \mu(S_r(f) > \rho)^{\frac{1}{p}}  ,
	\end{equation*}
	Together with the assumptions in \eqref{eq:add_1}--\eqref{eq:add_3}, the previous inequality yields the desired inequality for every $q \in (0,\infty]$
	\begin{equation*}
	\norm{f}_{L^{p,\infty}_\mu(S_r)} \leq C \sup \Big\{ \mu(A)^{- \frac{1}{p'}} \norm{f 1_A}_{L^{1,q}_\mu(S_r)} \colon A \in \Sigma_\mu \Big\},
	\end{equation*}
	where the minimal value of the constant $C$ for $q \in (0,\infty]$ is defined by
	\begin{equation*}
	\inf \Big\{ \delta^{-1} (1-\delta^r)^{-\frac{p}{r p'}} \colon \delta \in [0,1] \Big\} = p^{\frac{p}{r}} (p-1)^{\frac{1-p}{r}} .
	\end{equation*}
\end{proof}

We comment on the modifications needed to drop the additional assumptions.
\begin{itemize}
	\item For every function $f \in L^{p,\infty}_\mu(S_r)$ and every $\delta > 0$ there exists $\rho_\delta \in (0,\infty)$ such that 
	\begin{equation*}
	\rho_\delta \mu( S_r (f) > \rho_\delta ) \leq \norm{f}_{L^{p,\infty}_\mu(S_r)} \leq (1+\delta) \rho_\delta \mu( S_r (f) > \rho_\delta ).
	\end{equation*}
	
	\item For every function $f \in L^{p,\infty}_\mu(S_r)$, every $\lambda \in (0,\infty)$, and every $\delta > 0$ there exists a measurable subset $\Omega_{\lambda,\delta} \in \Sigma$ such that 
	\begin{equation*}
	\norm{f 1_{\Omega^c_{\lambda,\delta}}}_{L^{\infty}_\mu(S_r)} \leq \lambda, \qquad \qquad \mu(\Omega_{\lambda,\delta}) \leq (1+\delta) \mu( S_r (f) > \lambda ).
	\end{equation*}
	
	\item For every measurable subset $A \in \Sigma$ and every $\delta > 0$ there exists a measurable subset $B_{\delta} \in \Sigma^{\circ}_\omega(A)$ such that
	\begin{equation*}
	\mu(B) \leq (1+\delta) \mu^{\circ}(A).
	\end{equation*}
	In particular, for every measurable subset $A \in \Sigma$ we have
	\begin{equation*}
	\mu(A) \geq \mu^{\circ}(A) = \norm{1_{A}}_{L^{1,q}_\mu(\ell^\infty_\omega)}.
	\end{equation*}
	
	\item For all $a,p,q,r \in (0,\infty]$, every setting $(X,\mu,\omega)$, and every function $f \in L^{p,q}_\mu(S_r)$ we have
	\begin{equation*}
	\norm{f}_{L^{p,q}_\mu(S_r)} = \norm{f^{a}}^{\frac{1}{a}}_{L^{p/a,q/a}_\mu(S_{r/a})}.
	\end{equation*}
	In particular, for every constant $C = C\Big( 1, \frac{p}{a}, \frac{q}{a}, \frac{r}{a} \Big)$ such that
	\begin{equation*}
	\norm{f}_{L^{p/a,\infty}_\mu(S_{r/a})} \leq C \sup \Big\{ \mu(A)^{\frac{a}{p} - 1} \norm{f 1_A}_{L^{1,q/a}_\mu(S_{r/a})} \colon A \in \Sigma_\mu \Big\},
	\end{equation*}
	we have
	\begin{equation*}
	\norm{f}_{L^{p,\infty}_\mu(S_r)} \leq C^{\frac{1}{a}} \sup \Big\{ \mu(A)^{\frac{1}{p} - \frac{1}{a}} \norm{f 1_A}_{L^{a,q}_\mu(S_r)}  \colon A \in \Sigma_\mu \Big\},
	\end{equation*}
	and analogously for an inequality in the opposite direction.
\end{itemize}
For every fixed $\delta > 0$ we can apply the arguments we described in the previous proofs. Taking $\delta > 0$ arbitrarily small we obtain a proof of Theorem~\ref{thm:intro_second} with the appropriate constants.

\section{Proof of Theorem~\ref{thm:intro_first}} \label{sec:other_proof}

We prove Theorem~\ref{thm:intro_second} under the additional assumptions listed at the beginning of the previous section. Moreover, we make some other additional assumptions. At the end of the proof we comment on the modifications needed to drop them.

\begin{itemize}
	\item For every arbitrary function $f \in L^{\infty}_\mu(\ell^r_\omega)$ we assume that there exists $A_\rho \in \mathcal{A} \subseteq \Sigma_\mu$ such that
\begin{equation} \label{eq:add_1_bis}
\norm{f}_{L^{\infty}_\mu(\ell^r_\omega)} = \ell^r_\omega(f)(A_\rho) = \rho \in (0,\infty).
\end{equation}

\item For every arbitrary function $f \in L^{p,\infty}_\mu(\ell^r_\omega)$, every arbitrary measurable subset $B \in \Sigma$, and every arbitrary $\lambda \in (0,\infty)$ we assume that there exists a measurable subset $\Omega_\lambda(B) \in \Sigma$, $\Omega_\lambda (B) \subseteq B$ such that
\begin{equation} \label{eq:add_5}
\norm{f 1_B 1_{\Omega_\lambda(B)^c}}_{L^{\infty}_\mu(\ell^r_\omega)} \leq \lambda, \qquad \qquad \mu(\Omega_\lambda(B)) = \mu( \ell^r_\omega (f 1_B) > \lambda ) \in [0,\infty).
\end{equation}
\end{itemize}

Next, we state and prove the following auxiliary results. We postpone their proofs to Appendix~\ref{sec:appendix_2}.

\begin{lemma} \label{thm:lemma_2}
	Let $(X,\mu,\omega)$ be a $\sigma$-finite setting. Let $r \in (0,\infty)$. Let $f \in L^\infty_\mu(\ell^r_\omega)$. Let $A \in \Sigma_\mu$ be a measurable subset such that 
	\begin{equation} \label{eq:condition_3}
	\ell^r_\omega(f)(A) \geq \rho \in (0,\infty).
	\end{equation}
	Then for every $\lambda \in (0,\rho)$ we have
	\begin{equation*}
	\mu(A) \leq \frac{\norm{f}^r_{L^\infty_\mu(\ell^r_\omega)}}{\rho^r - \lambda^r} \mu(\ell^r_\omega(f) > \lambda). 
	\end{equation*}
\end{lemma}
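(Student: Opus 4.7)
The plan is to unpack the definition of $\mu(\ell^r_\omega(f) > \lambda)$ as an infimum, and for any admissible superlevel set $\Omega$ split the integral of $|f|^r$ over $A$ according to whether we are inside $\Omega$ or not. This yields one term controlled by the hypothesis on $\Omega$, giving a factor $\lambda^r \mu(A)$, and one term controlled by the $L^\infty_\mu(\ell^r_\omega)$ norm on the subset $A \cap \Omega$, giving a factor $\|f\|^r_{L^\infty_\mu(\ell^r_\omega)} \mu(\Omega)$.

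Concretely, the hypothesis \eqref{eq:condition_3} unpacks as $\int_A |f|^r \diff\omega \geq \rho^r \mu(A)$. Fix any $\Omega \in \Sigma$ with $\norm{f 1_{\Omega^c}}_{L^\infty_\mu(\ell^r_\omega)} \leq \lambda$. Since $A \in \Sigma_\mu = \mathcal{A}$, evaluating the size $\ell^r_\omega(f 1_{\Omega^c})$ at $A$ gives $\int_{A \setminus \Omega} |f|^r \diff\omega \leq \lambda^r \mu(A)$. For the complementary piece $A \cap \Omega$, assuming $\mu(A \cap \Omega) \in (0,\infty)$ we have $A \cap \Omega \in \mathcal{A}$, so
\begin{equation*}
\int_{A \cap \Omega} |f|^r \diff\omega = \mu(A \cap \Omega) \, \ell^r_\omega(f)(A \cap \Omega)^r \leq \mu(\Omega) \norm{f}^r_{L^\infty_\mu(\ell^r_\omega)}.
\end{equation*}
The degenerate cases are harmless: $\mu(A \cap \Omega) = 0$ forces $\omega(A \cap \Omega) = 0$ by the compatibility assumption in the $\sigma$-finite setting, so the integral vanishes, while $\mu(A \cap \Omega) = \infty$ is excluded by $\mu(A) < \infty$.

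Adding the two bounds and using the decomposition of $\int_A |f|^r \diff\omega$ yields
\begin{equation*}
\rho^r \mu(A) \leq \mu(\Omega) \norm{f}^r_{L^\infty_\mu(\ell^r_\omega)} + \lambda^r \mu(A),
\end{equation*}
and since $\lambda < \rho$ the factor $\rho^r - \lambda^r > 0$ may be divided through, giving $\mu(A) \leq (\rho^r - \lambda^r)^{-1} \norm{f}^r_{L^\infty_\mu(\ell^r_\omega)} \mu(\Omega)$. Taking the infimum over all admissible $\Omega$ produces the stated bound. I do not expect any genuine difficulty here beyond bookkeeping; the only thing to watch is that the intermediate set $A \cap \Omega$ belongs to the size domain $\mathcal{A}$, which is automatic from $A \in \Sigma_\mu$ together with the $\sigma$-finite setting axiom.
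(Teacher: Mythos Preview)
Your proposal is correct and follows essentially the same approach as the paper: both split $\int_A |f|^r \diff\omega$ into the contributions from $\Omega$ and $\Omega^c$, bounding the first via $\norm{f}_{L^\infty_\mu(\ell^r_\omega)}$ and the second via the condition $\norm{f 1_{\Omega^c}}_{L^\infty_\mu(\ell^r_\omega)} \leq \lambda$. The only cosmetic differences are that the paper works with $\Omega_\lambda$ rather than $A \cap \Omega$ and uses an $\varepsilon$-approximation instead of taking the infimum at the end; your handling of the degenerate cases $\mu(A\cap\Omega)\in\{0,\infty\}$ is slightly more explicit than the paper's.
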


\begin{lemma} \label{thm:lemma_1}
	Let $(X,\mu,\omega)$ be a $\sigma$-finite setting. Let $p,r \in (0,\infty]$. Let $f \in L^{p,\infty}_\mu(\ell^r_\omega)$. For every $\lambda \in (0,\infty)$ such that
	\begin{equation} \label{eq:condition}
	\norm{f}_{L^\infty_\mu(\ell^r_\omega)} > \lambda,
	\end{equation}
	there exists a measurable subset $B \in \Sigma$ such that
	\begin{equation} \label{eq:condition_2}
	\ell^r_\omega(f)(B) > \lambda, \qquad \qquad \norm{f 1_{B^c}}_{L^\infty_\mu(\ell^r_\omega)} \leq \lambda.
	\end{equation}
\end{lemma}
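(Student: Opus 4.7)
The plan is to take $B$ to be the Hahn positive part of the auxiliary signed measure $\nu := \abs{f}^r \omega - \lambda^r \mu$ on $\Sigma$, intersected with the extremal set provided by the additional assumption~\eqref{eq:add_5}. Observe that $\nu$ is $\sigma$-finite, and that for every $A \in \mathcal{A}$ one has $\nu(A) > 0$ precisely when $\ell^r_\omega(f)(A) > \lambda$. The Hahn-Jordan decomposition theorem yields $P \in \Sigma$ such that $\nu(A \cap P) \geq 0$ and $\nu(A \setminus P) \leq 0$ for every $A \in \Sigma$. Applying \eqref{eq:add_5} to the whole space $X$ provides $\Omega_\lambda \in \Sigma$ with $\norm{f 1_{\Omega_\lambda^c}}_{L^\infty_\mu(\ell^r_\omega)} \leq \lambda$ and $\mu(\Omega_\lambda) = \mu(\ell^r_\omega(f) > \lambda) \in [0, \infty)$.

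The key geometric observation is that $\Omega_\lambda^c$ is itself Hahn-negative for $\nu$. Indeed, for any $A \in \mathcal{A}$ contained in $\Omega_\lambda^c$ one has
\[
\ell^r_\omega(f)(A) = \ell^r_\omega(f 1_{\Omega_\lambda^c})(A) \leq \norm{f 1_{\Omega_\lambda^c}}_{L^\infty_\mu(\ell^r_\omega)} \leq \lambda,
\]
which gives $\nu(A) \leq 0$; this extends by $\sigma$-finiteness to all measurable subsets of $\Omega_\lambda^c$. Consequently $P \cap \Omega_\lambda^c$ is simultaneously Hahn-positive and Hahn-negative for $\nu$, hence $\nu$-null, and replacing $P$ with $P \cap \Omega_\lambda$ preserves the Hahn decomposition while ensuring $P \subseteq \Omega_\lambda$ and therefore $\mu(P) \leq \mu(\Omega_\lambda) < \infty$.

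Setting $B := P$ after this replacement, I would verify the two required conclusions. The upper bound $\norm{f 1_{B^c}}_{L^\infty_\mu(\ell^r_\omega)} \leq \lambda$ follows because every measurable subset of $B^c$ has $\nu \leq 0$, giving $\int_{A \cap B^c} \abs{f}^r \diff \omega \leq \lambda^r \mu(A \cap B^c) \leq \lambda^r \mu(A)$ for every $A \in \mathcal{A}$. For the strict lower bound $\ell^r_\omega(f)(B) > \lambda$, which amounts to $\nu(B) > 0$, I would use the hypothesis $\norm{f}_{L^\infty_\mu(\ell^r_\omega)} > \lambda$ to produce some $A^* \in \mathcal{A}$ with $\nu(A^*) > 0$; then
\[
\nu(A^* \cap P) \geq \nu(A^*) - \nu(A^* \setminus P) \geq \nu(A^*) > 0,
\]
forcing $\nu(B) = \nu(P) \geq \nu(A^* \cap P) > 0$. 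Finally $\mu(B) > 0$ follows from $\nu(B) > 0$ together with the $\sigma$-finite implication $\mu(B) = 0 \Rightarrow \omega(B) = 0 \Rightarrow \nu(B) = 0$.

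The main obstacle I anticipate is justifying the Hahn-Jordan decomposition of $\nu$ in the outer-measure setting and guaranteeing the finiteness $\mu(B) < \infty$, since the raw Hahn decomposition alone offers no control on this quantity. The truncation against $\Omega_\lambda$ resolves both issues: it localizes $B$ within a finite-$\mu$-measure set by the built-in finiteness statement in \eqref{eq:add_5}, and it provides the extremality information used to identify $P \cap \Omega_\lambda^c$ as a $\nu$-null set.
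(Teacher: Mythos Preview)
Your approach has a genuine gap: $\mu$ is an \emph{outer} measure, not a measure. It is only countably subadditive on $\Sigma$, not countably additive, so $\nu \coloneq \abs{f}^r\omega - \lambda^r\mu$ is not a signed measure and the Hahn--Jordan theorem does not apply. You flag this as an ``obstacle'' and then assert that truncation against $\Omega_\lambda$ ``resolves both issues,'' but the truncation only addresses the finiteness of $\mu(B)$; it does nothing to produce a Hahn set $P$ in the first place. In the settings that motivate the paper (for instance the outer measure $\mu_\kappa$ on Heisenberg tiles, generated by minimal tree coverings) $\mu$ is genuinely non-additive on disjoint sets, so there is no hidden $\sigma$-algebra on which a Hahn decomposition can be recovered. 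Every subsequent step of your argument---the identity $\nu(A^*\cap P)=\nu(A^*)-\nu(A^*\setminus P)$, the monotonicity $\nu(P)\ge\nu(A^*\cap P)$, the nullity of $P\cap\Omega_\lambda^c$---relies on additivity of $\nu$ that is simply unavailable here.

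The paper's proof sidesteps this obstruction by a greedy exhaustion: it iteratively selects sets $B_n$ with $\ell^r_\omega(f\,1_{A_{n-1}^c})(B_n)\ge\lambda$ whose $\mu$-measure is within a factor of two of the supremum available at stage $n$, and uses Lemma~\ref{thm:lemma_2} together with the finiteness of $\norm{f}_{L^{p,\infty}_\mu(\ell^r_\omega)}$ to force $\mu(B_n)\to 0$ (or termination). The union $B=\bigcup_n B_n$ then satisfies both conclusions. This construction uses only monotonicity and subadditivity of $\mu$, never additivity, which is precisely why it survives in the outer-measure setting where the Hahn route does not.
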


\begin{proof} [Proof of property~$(i)$ in Theorem~\ref{thm:intro_first}]
	Without loss of generality we assume $f \nequiv 0$, otherwise the property is trivially satisfied.
	
	We turn to the proof of the inequality
	\begin{equation*}
	\norm{f}_{L^{\infty}_\mu(\ell^r_\omega)} \leq C \sup \Big\{ \mu(A)^{- 1} \norm{f 1_A}_{L^{1,q}_\mu(\ell^r_\omega)} \colon A \in \Sigma_\mu \Big\} .
	\end{equation*} 
	We distinguish two cases.
	
	{\textbf{Case I: $r = \infty$.}} See {\textbf{Case I}} in the proof of Theorem~\ref{thm:intro_second}.
	
	{\textbf{Case II: $r \neq \infty$.}} By Lemma~\ref{thm:lemma_2}, for every $k \in (0, 1)$ we have
	\begin{equation} \label{eq:second_equation_2}
	\mu(A_{\rho}) \leq (1-k^r)^{-1} \mu( \Omega_{k \rho}(A_{\rho}) ).
	\end{equation}
	By the previous inequality, for every $q \in (0,\infty]$ and every $k \in (0,1)$ we have 
	\begin{equation*}
	\norm{f 1_{A_{\rho}}}_{L^{1,q}_\mu(\ell^r_\omega)} \geq (k-k^{r+1}) \rho \mu(A_{\rho}). 
	\end{equation*}	
	Together with the assumptions in \eqref{eq:add_1}--\eqref{eq:add_3} and in \eqref{eq:add_1_bis}--\eqref{eq:add_5}, the previous inequality yields the desired inequality for every $q \in (0,\infty]$
	\begin{equation*}
	\norm{f}_{L^{\infty}_\mu(\ell^r_\omega)} \leq C \sup \Big\{ \mu(A)^{- 1} \norm{f 1_A}_{L^{1,q}_\mu(\ell^r_\omega)} \colon A \subseteq X, A \neq \varnothing \Big\},
	\end{equation*}
	where the minimal value of the constant $C$ is defined by
	\begin{equation*}
	\inf \Big\{ (k-k^{r+1})^{-1} \colon  k \in (0,1) \Big\} = (r+1)^{\frac{r+1}{r}} r^{-1} .
	\end{equation*}	
\end{proof}

\begin{remark} \label{rmk:suff}
	A sufficient condition on the size $(S,\mathcal{A})$ to generalize both Lemma~\ref{thm:lemma_2} and property~$(i)$ in Theorem~\ref{thm:intro_first} to the case of the sizes $(S_r,\mathcal{A})$  is the following.
	
	For every measurable function $f$ on $X$, every subset $A \in \mathcal{A}$, and every measurable subset $B \in \Sigma$ we have
	\begin{equation} \label{eq:new}
	S(f)(A) \mu(A) \leq \norm{f}_{L^\infty_\mu(S)} \mu( B) + \norm{f 1_{A \setminus B}}_{L^\infty_\mu(S)} \mu(A ) .
	\end{equation}
	
	It is easy to observe that the size $(\ell^1_\omega, \Sigma_\mu)$ satisfies the inequality in \eqref{eq:new}.
\end{remark}

\begin{proof} [Proof of property~$(ii)$ in Theorem~\ref{thm:intro_first}]
	Without loss of generality we assume $f \nequiv 0$, otherwise the property is trivially satisfied.
	
	Let $K'$ be defined by
	\begin{equation*}
	K' \coloneq \sup \Big\{ \inf \Big\{ \mu(A)^{-\frac{1}{p'}} \norm{f 1_B}_{L^{1,q}_\mu(\ell^r_\omega)} \colon B \in \Sigma'_\mu(A) \Big\} \colon A \in \Sigma_\mu \Big\}.
	\end{equation*}	
	
	{\textbf{Case I: $r= \infty$.}} 
	We start with the proof of the inequality
	\begin{equation*}
	K' \leq C \norm{f}_{L^{p,\infty}_{\mu}(\ell^\infty_\omega)}.
	\end{equation*}
	For every measurable subset $A \in \Sigma_\mu$ we define $\lambda_0 \in (0,\infty)$ by
	\begin{equation*}
	\lambda_0 \coloneq 2^{\frac{1}{p}} \mu(A)^{-\frac{1}{p}} \norm{f}_{L^{p,\infty}_{\mu}(\ell^\infty_\omega)}.
	\end{equation*}
	By the definition of the outer $L^{p,\infty}_\mu(\ell^\infty_\omega)$ quasi-norm we have
	\begin{equation*}
	\mu(\Omega_{\lambda_0}) \leq \lambda_0^{-p} \norm{f}_{L^{p,\infty}_{\mu}(\ell^\infty_\omega)}^p \leq \frac{\mu(A)}{2}, 
	\end{equation*}
	hence we define the measurable subset $B \in \Sigma'_\mu(A)$ by
	\begin{equation*}
	B \coloneq A \setminus \Omega_{\lambda_0} \subseteq \Big\{ x \in A \colon \abs{f(x)} < \lambda_0 \Big\}.
	\end{equation*}
	By the inclusion in the previous display and the equality in \eqref{eq:first_equation}, for every $q \in (0,\infty]$ we have
	\begin{align*}
	\mu(A)^{- \frac{1}{p'}} \norm{f 1_B}_{L^{1,q}_\mu(\ell^\infty_\omega)} & \leq 2^{\frac{1}{p}} \mu(A)^{- 1} \norm{f}_{L^{p,\infty}_{\mu}(\ell^\infty_\omega)} \norm{1_B}_{L^{1,q}_\mu(\ell^\infty_\omega)} \\
	& \leq 2^{\frac{1}{p}} q^{-\frac{1}{q}} \norm{f}_{L^{p,\infty}_\mu(\ell^\infty_\omega)}  .
	\end{align*}
	where $q^{\frac{1}{q}}$ for $q=\infty$ is understood to be $1$. Together with the assumptions in \eqref{eq:add_1}--\eqref{eq:add_3}, the previous inequality yields the desired inequality.
	
	We turn to the proof of the inequality
	\begin{equation*}
	\norm{f}_{L^{p,\infty}_{\mu}(\ell^\infty_\omega)} \leq C K'.
	\end{equation*}
	Without loss of generality, we assume
	\begin{equation} \label{eq:1}
	\Omega_\rho \subseteq \Big\{ x \in X \colon \abs{f(x)} > \rho \Big\}.
	\end{equation}
	For every measurable subset $B_{\rho} \in \Sigma'_\mu( \Omega_{\rho} )$ we have
	\begin{equation*}
	\mu(B_\rho) \geq \frac{\mu(\Omega_\rho)}{2}.
	\end{equation*}
	By the inclusion in \eqref{eq:1}, the equality in \eqref{eq:first_equation}, and the previous inequality, for every $q \in (0,\infty]$ we have
	\begin{equation*}
	\norm{f 1_{B_\rho}}_{L^{1,q}(\ell^\infty_\omega)} \geq \norm{\rho 1_{B_\rho}}_{L^{1,q}(\ell^\infty_\omega)} = q^{-\frac{1}{q}} \rho \mu(B_\rho),
	\end{equation*}
	where $q^{\frac{1}{q}}$ for $q=\infty$ is understood to be $1$. Together with the assumptions in \eqref{eq:add_1}--\eqref{eq:add_3}, the previous two inequalities yield the desired inequality.

	{\textbf{Case II: $r \neq \infty$.}} We start with the proof of the inequality
	\begin{equation*}
	K' \leq C \norm{f}_{L^{p,\infty}_{\mu}(\ell^r_\omega)}.
	\end{equation*}
	For every measurable subset $A \in \Sigma_\mu$ we define $\lambda_0 \in (0,\infty)$ by
	\begin{equation*}
	\lambda_0 \coloneq 2^{\frac{1}{p}} \mu(A)^{-\frac{1}{p}} \norm{f}_{L^{p,\infty}_{\mu}(\ell^r_\omega)}.
	\end{equation*}
	By the definition of the outer $L^{p,\infty}_\mu(\ell^r_\omega)$ quasi-norm we have
	\begin{equation*}
	\mu(\Omega_{\lambda_0}) \leq \lambda_0^{-p} \norm{f}_{L^{p,\infty}_{\mu}(\ell^r_\omega)}^p \leq \frac{\mu(A)}{2}. 
	\end{equation*}
	and we define the measurable subset $B \in \Sigma'_\mu(A)$ by
	\begin{equation*}
	B \coloneq A \setminus \Omega_{\lambda_0}.
	\end{equation*}
	By outer H\"{o}lder's inequality for outer Lorentz $L^{p,q}$ spaces (Theorem~\ref{thm:Holder_Lorentz} in Appendix~\ref{sec:appendix}) and the equality in \eqref{eq:first_equation}, for every $q \in (0,\infty]$ we have
	\begin{align*}
	\mu(A)^{- \frac{1}{p'}} \norm{f 1_B}_{L^{1,q}_\mu(\ell^r_\omega)} & \leq 2 \mu(A)^{- \frac{1}{p'}} \norm{f 1_{\Omega_{\lambda_0}^c}}_{L^{\infty}_{\mu}(\ell^r_\omega)} \norm{1_B}_{L^{1,q}_\mu(\ell^\infty_\omega)} \\
	& \leq 2^{1 + \frac{1}{p}} q^{-\frac{1}{q}} \norm{f}_{L^{p,\infty}_{\mu}(\ell^r_\omega)}  ,
	\end{align*}
	where $q^{\frac{1}{q}}$ for $q=\infty$ is understood to be $1$. Together with the assumptions in \eqref{eq:add_1}--\eqref{eq:add_3}, the previous inequality yields the desired inequality.

	We turn to the proof of the inequality
	\begin{equation*}
	\norm{f}_{L^{p,\infty}_{\mu}(\ell^r_\omega)} \leq C K'.
	\end{equation*}
	By the definition of the outer $L^{p,\infty}_\mu(\ell^r_\omega)$ quasi-norm, the choice of $\rho$, and the subadditivity of $\mu$, for every $M \in (1,\infty)$ we have
	\begin{align*}
	M \rho \mu( \Omega_{M \rho} )^{\frac{1}{p}} & \leq \rho \mu( \Omega_{\rho} )^{\frac{1}{p}} , \\
	\mu( \Omega_{\rho} ) & \leq \mu( \Omega_{M \rho} ) + \mu( \Omega_\rho ( \Omega_{M \rho}^c ) ) .
	\end{align*}
	Hence, we have
	\begin{equation} \label{eq:aux_3}
	0 < M^{p} \mu( \Omega_{\rho} ) - \mu( \Omega_{\rho} ) \leq M^{p} [ \mu( \Omega_{\rho} ) - \mu( \Omega_{M \rho} ) ] \leq M^{p} \mu( \Omega_\rho ( \Omega_{M \rho}^c ) ) .
	\end{equation}
	The previous chain of inequalities yields
	\begin{equation} \label{eq:aux_1}
	\norm{f}_{L^{p,\infty}_\mu(\ell^r_\omega)} \leq M (M^{p} - 1)^{-\frac{1}{p}} \rho \mu( \Omega_\rho ( \Omega_{M \rho}^c ) )^{\frac{1}{p}}.
	\end{equation}
	Moreover, by the chain of inequalities in \eqref{eq:aux_3} we have $\Omega_\rho(\Omega^c_{M \rho}) \neq \varnothing$. In particular, for every $\varepsilon > 0$ we have
	\begin{equation*}
	\norm{f 1_{\Omega^c_{M \rho}}}_{L^\infty_\mu(\ell^r_\omega)} > \rho_\varepsilon,
	\end{equation*}
	where $\rho_\varepsilon \in (0,\rho)$ is defined by
	\begin{equation*}
	\rho_\varepsilon \coloneq (1 - \varepsilon) \rho.
	\end{equation*}
	Therefore, by Lemma~\ref{thm:lemma_1} there exists a measurable subset $B_{\rho_\varepsilon} \in \Sigma_\mu( \Omega_{M \rho}^c)$ such that
	\begin{equation} \label{eq:sel_1}
	\ell^r_\omega(f)(B_{\rho_\varepsilon}) > \rho_\varepsilon , \qquad \qquad \norm{f 1_{\Omega_{M \rho}^c} 1_{B_{\rho_\varepsilon}^c} }_{L^\infty_\mu(\ell^r_\omega)} \leq \rho_\varepsilon.
	\end{equation}
	By the inequality in \eqref{eq:aux_1} and the second inequality in \eqref{eq:sel_1}, we have
	\begin{equation} \label{eq:second_equation_bis}
	\norm{f}_{L^{p,\infty}_\mu(\ell^r_\omega)} \leq M (M^{p} - 1)^{-\frac{1}{p}} \rho \mu( B_{\rho_\varepsilon} )^{\frac{1}{p}}  .
	\end{equation}
	
	By the definition of $K'$, there exists a subset $D_{\rho_\varepsilon} \in \Sigma'_\mu(B_{\rho_\varepsilon})$ such that
	\begin{equation} \label{eq:auxiliary}
	\norm{f 1_{D_{\rho_\varepsilon}}}_{L^{1,q}(\ell^r_\omega)} \leq K' \mu(B_{\rho_\varepsilon})^{ \frac{1}{p'}}.
	\end{equation}
	Furthermore, by the first inequality in \eqref{eq:sel_1} and the inclusion $B_{\rho_\varepsilon} \subseteq \Omega^c_{M \rho}$, for every subset $D \in \Sigma'_\mu(B_{\rho_\varepsilon})$ we have
	\begin{align*}
	\ell^r_\omega(f)(D) & \geq \mu(D)^{-\frac{1}{r}} [ \rho_\varepsilon^r \mu(B_{\rho_\varepsilon}) - M^r \rho^r \mu(B_{\rho_\varepsilon} \setminus D) ]^{\frac{1}{r}} \\
	& \geq \mu(B_{\rho_\varepsilon})^{-\frac{1}{r}} \Big[ \rho_\varepsilon^r \mu(B_{\rho_\varepsilon}) - M^r \rho^r \frac{\mu(B_{\rho_\varepsilon})}{2} \Big]^{\frac{1}{r}} \\
	& \geq \Big( 1 - \frac{M_\varepsilon^r }{2  } \Big)^{\frac{1}{r}} \rho_\varepsilon ,
	\end{align*}
	where $M_\varepsilon \in (M,\infty)$ is defined by
	\begin{equation*}
	M_\varepsilon \coloneq \frac{M}{1-\varepsilon}.
	\end{equation*}
	In particular, for every $\lambda = k \Big( 1 - \frac{M_\varepsilon^r }{2} \Big)^{1/r} \rho_\varepsilon$ with $k \in (0,1)$ we have
	\begin{equation} \label{eq:second_equation_ter}
	\mu(B_{\rho_\varepsilon}) \leq 2 \mu(D) \leq 2 M_\varepsilon^r  \Big( 1 - \frac{M_\varepsilon^r }{2} \Big)^{-1} (1-k^r)^{-1} \mu( \Omega_\lambda ( D ) ) ,
	\end{equation}
	where the second inequality follows by Lemma~\ref{thm:lemma_2}. By the second inequality in \eqref{eq:second_equation_ter}, for every $q \in (0,\infty)$ and every $k \in (0,1)$ we have
	\begin{equation*}
	\norm{f 1_D}_{L^{1,\infty}_\mu(\ell^r_\omega)} \geq M_\varepsilon^{-r} \Big( 1 - \frac{M_\varepsilon^r}{2 } \Big)^{ \frac{r+ 1}{r}} (k - k^{r+1}) \rho_\varepsilon \mu(D) .
	\end{equation*}
	Taking $\varepsilon > 0$ arbitrarily small, together with the assumptions in \eqref{eq:add_1}--\eqref{eq:add_3} and in \eqref{eq:add_5}, the inequality in \eqref{eq:second_equation_bis}, the first inequality in \eqref{eq:second_equation_ter}, and the inequality in \eqref{eq:auxiliary}, the previous inequality yields the desired inequality for every $q \in (0,\infty]$
	\begin{equation*}
	\norm{f}_{L^{p,\infty}_\mu(\ell^r_\omega)} \leq 2 C K',
	\end{equation*}
	where the minimal value of the constant $C$ for every $q \in (0,\infty]$ is defined by
	\begin{align*}
	\inf & \Big\{ ( 1 - M^{-p} )^{-\frac{1}{p}} \Big( 1 - \frac{M^r}{2} \Big)^{-  \frac{r+ 1}{r}} M^r ( k - k^{r+1} )^{-1} \colon M \in (1,2^{\frac{1}{r}}), k \in (0,1) \Big\} = \\
	& = \frac{(r+1)^{\frac{r+1}{r}}}{r} \inf \Big\{ M^{r} (1 - M^{-p})^{-\frac{1}{p}} \Big( 1 - \frac{M^r}{2} \Big)^{-  \frac{r+ 1}{r}} \colon M \in (1,2^{\frac{1}{r}}) \Big\} .
	\end{align*}
\end{proof}

We comment on the modifications needed to drop the additional assumption.
\begin{itemize}
	\item For every function $f \in L^{\infty}_\mu(\ell^r_\omega)$ and every $\delta > 0$ there exists $A_{\rho,\delta} \in \mathcal{A}$ such that 
	\begin{equation*}
	(1- \delta) \rho \leq \ell^r_\omega(f)(A_{\rho,\delta}) \leq \rho = \norm{f}_{L^{\infty}_\mu(\ell^r_\omega)}.
	\end{equation*}
\end{itemize}
For every fixed $\delta > 0$ we can apply the arguments we described in the previous proofs. Taking $\delta > 0$ arbitrarily small we obtain a proof of Theorem~\ref{thm:intro_first} with the appropriate constants.

\begin{remark} \label{rmk:rmk_2}
	For every measurable subset $A \in \Sigma$, we recall the definition of the collection $\Sigma''_\mu(A) \subseteq \Sigma$ by
	\begin{equation*}
	\Sigma''_\mu(A) \coloneq \Big\{ B \in \Sigma \colon B \subseteq A, \mu(B) \geq \frac{\mu(A)}{2} \Big\}.
	\end{equation*} 
	Let $K''$ be defined by
	\begin{equation*}
	K'' \coloneq \sup \Big\{ \inf \Big\{ \mu(A)^{\frac{1}{p}-\frac{1}{a}} \norm{f 1_B}_{L^{a,q}_\mu(\ell^r_\omega)} \colon B \in \Sigma''_\mu(A) \Big\} \colon A \in \Sigma_\mu \Big\}.
	\end{equation*}	
	For every measurable subset $B \in \Sigma'_\mu (A)$ we have $\mu(A) \leq 2 \mu(B)$, hence
	\begin{equation*}
	K'' \leq K'.
	\end{equation*}
	In fact, for every $p \leq a$ and every function $f \in L^{p,\infty}_\mu(\ell^\infty_\omega)$ we have
	\begin{equation*}
	\norm{f}_{L^{p,\infty}_{\mu}(\ell^\infty_\omega)} \sim_{C} K''.
	\end{equation*}
	Moreover, for all $p \leq a$, $r \neq \infty$ and every function $f \in L^{p,\infty}_\mu(\ell^r_\omega)$ we have
	\begin{equation*}
	K'' \leq C \norm{f}_{L^{p,\infty}_{\mu}(\ell^r_\omega)} .
	\end{equation*}
	However, for every $M \in (1,\infty)$ there exists a $\sigma$-finite setting $(X,\mu,\omega)$ and a function $f \in L^{p,\infty}_\mu(\ell^r_\omega)$ such that
	\begin{equation*}
	\norm{f}_{L^{p,\infty}_{\mu}(\ell^r_\omega)} \geq M K''.
	\end{equation*}
	In particular, let
	\begin{align*}
	X & = \{ j \in \N \colon 1 \leq j \leq m \}, \\
	\mu(A) & = 1, \qquad \qquad && \text{for every subset $A \subseteq X$, $A \neq \varnothing$,} \\
	\omega(A) & = \abs{A}, \qquad \qquad && \text{for every subset $A \subseteq X$,} \\
	f & = 1_{X}.
	\end{align*}
	Then, we have
	\begin{equation*}
	\norm{f}_{L^{p,\infty}_{\mu}(\ell^r_\omega)} = m^{\frac{1}{r}}, \qquad \qquad K'' \leq \sup \Big\{ \mu(\{ x_A \})^{\frac{1}{p}} \colon A \subseteq X, A \neq \varnothing \Big\}= 1, 
	\end{equation*}	
	where for every subset $A \subseteq X$, $A \neq \varnothing$ we choose $x_A \in A$ arbitrarily. Taking $m \in \N$ big enough we obtain the desired inequality. 
\end{remark}

\section{Function spaces on the collection of Heisenberg tiles} \label{sec:corollary}

\subsection*{$\sigma$-finite setting on the collection of Heisenberg tiles and lacunary sizes}

For all $m,l \in \Z$ we define the \emph{dyadic interval} $D(m,l)$ in $\R$ by
\begin{equation*}
D(m,l) \coloneq ( 2^l m, 2^l (m+1) ],
\end{equation*}
and the collection $\mathcal{D}$ of dyadic intervals in $\R$ by
\begin{equation*}
\mathcal{D} \coloneq \Big\{ D(m,l)  \colon m , l \in \Z \Big\}.
\end{equation*}
Moreover, for all $m,n,l \in \Z$ we define the \emph{dyadic Heisenberg tile} $H(m,n,l)$ in $\R^2$ by
\begin{equation*}
H(m,n,l) \coloneq D( m,l) \times D(n,-l),
\end{equation*}
and the collection $\mathcal{H}$ of dyadic Heisenberg tiles in $\R^2$ by
\begin{equation*}
\mathcal{H} \coloneq \Big\{ H(m,n,l)  \colon m ,n,l \in \Z \Big\}.
\end{equation*}

Then, for all $M,L \in \Z$ we define the \emph{stripe} $E(M,L) \subseteq \mathcal{H}$ by
\begin{equation*}
E(M,L) = E(D(M,L)) \coloneq \Big\{ H(m,n,l) \in \mathcal{H} \colon D(M,L) \subseteq D(m,l) \Big\}.
\end{equation*}
Moreover, for all $M,N,L \in \Z$ and for every $\kappa \in \Z$, $\kappa \geq 0$ we define the \emph{tree of $2^\kappa$-tiles} $T_{\kappa}(M,N,L) \subseteq \mathcal{H}$ by
\begin{equation*}
T_{\kappa}(M,N,L) \coloneq \Big\{ H(m,n,l) \in \mathcal{H} \colon D(m,l) \subseteq D(M,L), D(N,-L) \subseteq D(\lfloor n/{2^\kappa} \rfloor ,\kappa-l) \Big\},
\end{equation*}
where for every $x \in \R$ we define $\lfloor x \rfloor \in \Z$ to be the biggest integer number smaller than or equal to $x$.

Next, for every $\kappa \in \Z$, $\kappa \geq 0$ we define $(X_{\kappa},\mu_{\kappa},\omega_{\kappa})$ to be the $\sigma$-finite setting on the collection $\mathcal{H}$ of dyadic Heisenberg tiles as follows. Let
\begin{align*}
X_{\kappa} & = \mathcal{H}, \\
\mathcal{T}_{\kappa} & = \Big\{ T_{\kappa}(m,n,l) \colon m , n , l \in \Z \Big\}, \\
\sigma_{\kappa} (T_{\kappa}(m,n,l)) & = 2^{l} , && \text{for all $m , n , l \in \Z$,} \\
\omega_{\kappa} (H(m,n,l)) & = 2^{l} , && \text{for all $m , n , l \in \Z$,}
\end{align*}
and let $\mu_{\kappa}$ be the outer measures on $X_{\kappa}$ generated via minimal coverings by $\sigma_{\kappa}$ as follows. For every subset $A \subseteq X_{\kappa}$ we define $\mu_{\kappa}(A) \in [0,\infty]$ by
\begin{equation*}
\mu_{\kappa}(A) \coloneq \inf \Big\{ \sum_{T_{\kappa} \in \mathcal{A}} \sigma_{\kappa}(T_{\kappa}) \colon \mathcal{A} \subseteq \mathcal{T}_{\kappa}, A \subseteq \bigcup_{T_{\kappa} \in \mathcal{A}} T_{\kappa} \Big\},
\end{equation*}
where the infimum over an empty collection is understood to be $0$.

After that, for every $\kappa \in \Z$, $\kappa \geq 0$ we define a collection of tiles $T \subseteq \mathcal{H}$ a \emph{$\kappa$-lacunary tree} if it satisfies the following properties:
\begin{itemize}
\item There exists $M, N, L \in \Z$ such that
\begin{equation*}
T \subseteq T_\kappa(M,N,L).
\end{equation*}
\item For all $m,n,l,m',n',l' \in \Z$ such that 
\begin{equation*}
H(m,n,l), H(m',n',l') \in T, \qquad \qquad D(n,-l) \neq D(n',-l'),
\end{equation*}
we have
\begin{equation*}
D(n,-l) \cap D(n',-l') = \varnothing.
\end{equation*}
\end{itemize}
Moreover, we define $\mathcal{T}_{\kappa,\lac}$ to be the collection of all $\kappa$-lacunary trees. 

Finally, we define the \emph{lacunary size} $(S_{\kappa,\lac}, \mathcal{T}_{\kappa,\lac})$ as follows. For every $\kappa$-lacunary tree $T_{\kappa,\lac} \in \mathcal{T}_{\kappa,\lac}$ we define
\begin{equation*}
S_{\kappa,\lac} (f)(T_{\kappa,\lac}) \coloneq \mu_\kappa(T_{\kappa,\lac})^{-1} \norm{f 1_{T_{\kappa,\lac}}}_{L^1(X_\kappa,\omega_\kappa)} , 
\end{equation*}
and the outer Lorentz $L^{p,q}_{\mu_\kappa}(S_{\kappa,\lac})$ spaces as in the Introduction. In particular, for every $r \in (0,\infty]$ we denote by $S^r_{\kappa}$ the size $(S_{\kappa,\lac})_{r}$ defined in \eqref{eq:size_r} and in \eqref{eq:size_infty}.

It is easy to observe that the size $(S_{\kappa,\lac}, \mathcal{T}_{\kappa,\lac})$ satisfies the inequality in \eqref{eq:K_suport} with constant $K = 1$ and the inequality in \eqref{eq:new}.

\subsection*{The spaces $X^{p,q}_a(J,\kappa,\mathsf{size}_{2,\star})$ and equivalence with the outer $L^{p,\infty}_{\mu_{\kappa}}(S^2_\kappa)$ spaces}

The spaces $X^{p,q}_a(J,\kappa,\mathsf{size}_{2,\star})$ appearing in Section~3.8 in the article of Di Plinio and Fragkos \cite{2022arXiv220408051D} are associated with a triple of exponents $p,q,a \in [1,\infty]$, $p \geq a$, a dyadic interval $J \subseteq \R$, and a non-negative integer $\kappa \in \Z$, $\kappa \geq 0$. The size $( \mathsf{size}_{2,\star}, \mathcal{T}_{J,\kappa})$ on the collection of functions with support in the stripe $E(J)$ is defined as follows. Let $\mathcal{T}_{J,\kappa} \subseteq \mathcal{T}_{\kappa}$ be the collection of trees of $2^{\kappa}$-tiles contained in the stripe $E(J)$. For every tree $T_{\kappa} \in \mathcal{T}_{J,\kappa}$ and every function $F$ supported in the stripe $E(J)$ we define 
\begin{equation} \label{eq:cor_1}
\mathsf{size}_{2,\star} (F) (T_{\kappa}) \coloneq \norm{F 1_{T_{\kappa}}}_{L^\infty_{\mu_{\kappa}}(S^2_\kappa)}.
\end{equation}
The space $X^{p,q}_a(J,\kappa,\mathsf{size}_{2,\star})$ is defined by the following quasi-norm on functions $F$ supported in the stripe $E(J)$
\begin{equation} \label{eq:cor_2}
\norm{F}_{X^{p,q}_a(J,\kappa,\mathsf{size}_{2,\star})} \coloneq \sup \Big\{ \mu_{\kappa}(A)^{ \frac{1}{p} - \frac{1}{a}} \norm{F 1_A}_{L^{a,q}_{\mu_{\kappa}}(\mathsf{size}_{2,\star})} \colon A \in \Sigma_{\mu_{\kappa}} (E(J)) \Big\}.
\end{equation}

We are ready to state the equivalence between the $X^{p,q}_a(J,\kappa,\mathsf{size}_{2,\star})$ spaces and the outer $L^{p,\infty}_{\mu_{\kappa}}(S^2_\kappa)$ ones. 

\begin{corollary} \label{thm:corollary}
	For all $a,p,q \in [1,\infty]$, $p > a$ there exists a constant $C = C(a,p,q)$ such that the following properties hold true.
	
	For every $\kappa \in \Z$, $\kappa \geq 0$ let $(X_\kappa,\mu_\kappa,\omega_\kappa)$ be the setting and $\mathcal{T}_\kappa$ be the collection of generators described above. For every dyadic interval $J \in \mathcal{D}$, and every function $f$ on $X_\kappa$ we have
	\begin{equation*}
	\norm{f 1_{E(J)}}_{L^{p,\infty}_{\mu_\kappa}(S^2_\kappa)} = \alpha \sim_C \beta = \norm{f}_{X^{p,q}_a(J,\kappa,\mathsf{size}_{2,\star})} .
	\end{equation*}
	In particular, we have
	\begin{equation*}
	\alpha \leq \sqrt{\frac{p-a}{a}} \Big( \frac{p}{p-a} \Big)^{\frac{p}{2a}} \beta , \qquad \qquad \beta \leq 2^{\frac{1}{a}} \Big( \frac{p}{q (p-a)} \Big)^{\frac{1}{q}} \alpha.
	\end{equation*}
\end{corollary}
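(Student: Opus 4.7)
The plan is to deduce the corollary from Theorem~\ref{thm:intro_second} applied with the size $(S_{\kappa,\lac},\mathcal{T}_{\kappa,\lac})$ at the exponent $r=2$, combined with the identification of the outer $L^{a,q}_{\mu_\kappa}$ quasi-norms with respect to $S^2_\kappa$ and to $\mathsf{size}_{2,\star}$ on functions supported in the stripe $E(J)$. As stated in the paper, the size $(S_{\kappa,\lac},\mathcal{T}_{\kappa,\lac})$ satisfies property~(iv) in \eqref{eq:K_suport} with $K=1$; this property transfers to $(S^2_\kappa,\mathcal{T}_{\kappa,\lac})=((S_{\kappa,\lac})_2,\mathcal{T}_{\kappa,\lac})$, so Theorem~\ref{thm:intro_second} is applicable.

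First, I apply Theorem~\ref{thm:intro_second}~(i) to $f 1_{E(J)}$ with the size $S^2_\kappa$ and the exponents $p>a$ to obtain
\begin{equation*}
\alpha \sim_C \sup \Big\{ \mu_\kappa(A)^{\frac{1}{p}-\frac{1}{a}} \norm{f 1_{A\cap E(J)}}_{L^{a,q}_{\mu_\kappa}(S^2_\kappa)} \colon A\in\Sigma_{\mu_\kappa} \Big\}.
\end{equation*}
Since $f 1_{E(J)} 1_A = f 1_{A\cap E(J)}$, the $L^{a,q}$-norm depends on $A$ only through $A\cap E(J)$; and because $\frac{1}{p}-\frac{1}{a}<0$, we have $\mu_\kappa(A)^{1/p-1/a}\leq \mu_\kappa(A\cap E(J))^{1/p-1/a}$. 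Thus replacing $A$ by $A\cap E(J)$ only enlarges the expression, and the supremum coincides with its restriction to $A\in\Sigma_{\mu_\kappa}(E(J))$.

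Second, I claim that, for every function $h$ supported in $E(J)$ and every $\lambda\in(0,\infty)$,
\begin{equation*}
\mu_\kappa(\mathsf{size}_{2,\star}(h) > \lambda) = \mu_\kappa(S^2_\kappa(h) > \lambda),
\end{equation*}
whence the outer $L^{a,q}_{\mu_\kappa}$ quasi-norms of $h$ with respect to the two sizes coincide. By the definition of the super level measure it suffices to prove $\norm{h 1_{B^c}}_{L^\infty_{\mu_\kappa}(\mathsf{size}_{2,\star})} = \norm{h 1_{B^c}}_{L^\infty_{\mu_\kappa}(S^2_\kappa)}$ for every $B\in\Sigma$. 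The inequality ``$\leq$'' is direct from $\mathsf{size}_{2,\star}(g)(T_\kappa) = \norm{g 1_{T_\kappa}}_{L^\infty_{\mu_\kappa}(S^2_\kappa)} \leq \norm{g}_{L^\infty_{\mu_\kappa}(S^2_\kappa)}$. For the reverse, given a $\kappa$-lacunary tree $T_{\kappa,\lac}$, the support of $h 1_{B^c}$ in $E(J)$ gives $(h 1_{B^c}) 1_{T_{\kappa,\lac}} = (h 1_{B^c}) 1_{T_{\kappa,\lac}\cap E(J)}$, and $\mu_\kappa(T_{\kappa,\lac}\cap E(J))\leq \mu_\kappa(T_{\kappa,\lac})$ yields $S^2_\kappa(h 1_{B^c})(T_{\kappa,\lac})\leq S^2_\kappa(h 1_{B^c})(T_{\kappa,\lac}\cap E(J))$. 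By the dyadic structure of the tiles in $E(J)$, the restricted lacunary tree $T_{\kappa,\lac}\cap E(J)$ is contained in some generator $T_\kappa\in\mathcal{T}_{J,\kappa}$, so $S^2_\kappa(h 1_{B^c})(T_{\kappa,\lac}\cap E(J)) \leq \mathsf{size}_{2,\star}(h 1_{B^c})(T_\kappa) \leq \norm{h 1_{B^c}}_{L^\infty_{\mu_\kappa}(\mathsf{size}_{2,\star})}$, and taking the supremum over $T_{\kappa,\lac}$ proves the claim.

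Combining the two steps with $h = f 1_A$ for $A\in\Sigma_{\mu_\kappa}(E(J))$ gives $\alpha\sim_C\beta$, with the explicit constants inherited from the proof of Theorem~\ref{thm:intro_second}. The main obstacle is the inclusion in the second step: producing, for each $\kappa$-lacunary tree $T_{\kappa,\lac}$, a tree $T_\kappa\in\mathcal{T}_{J,\kappa}$ containing $T_{\kappa,\lac}\cap E(J)$. This rests on a careful matching of the spatial and frequency parameters of the lacunary tree with a suitable root $(M,N,L)$ defining a tree of $2^\kappa$-tiles inside $E(J)$.
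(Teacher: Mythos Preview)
Your approach is essentially the same as the paper's: apply Theorem~\ref{thm:intro_second} to $f 1_{E(J)}$ with the size $S^2_\kappa$, and then identify the resulting supremum with the $X^{p,q}_a$ quasi-norm via the equality of the outer $L^{a,q}_{\mu_\kappa}$ quasi-norms with respect to $S^2_\kappa$ and $\mathsf{size}_{2,\star}$ on functions supported in $E(J)$. The paper records this last identification simply as the ``auxiliary observation''
\[
\norm{F}_{L^{\infty}_{\mu_\kappa}(S^2_\kappa)} = \norm{F}_{L^{\infty}_{\mu_\kappa}(\mathsf{size}_{2,\star})} \qquad (F \text{ supported in } E(J)),
\]
from which equality of all the super level measures, and hence of all outer $L^{p,q}$ quasi-norms, follows at once. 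Your step~3 is a more explicit unpacking of exactly this observation, and the ``main obstacle'' you flag (producing a tree $T_\kappa \in \mathcal{T}_{J,\kappa}$ containing $T_{\kappa,\lac}\cap E(J)$) is precisely the combinatorial content the paper leaves implicit.

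One small omission: Theorem~\ref{thm:intro_second} is stated only for $p \neq \infty$, so your application of it does not cover the endpoint $p=\infty$ allowed in the corollary. The paper handles this by also invoking Remark~\ref{rmk:suff}, noting that the size $(S_{\kappa,\lac},\mathcal{T}_{\kappa,\lac})$ satisfies the condition \eqref{eq:new}, which extends property~(i) to $p=\infty$.
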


In fact, we can define analogous quasi-norms and spaces and prove an analogous statement in the case of the settings on the upper half $3$-space $\R^2 \times (0,\infty)$ described in the articles of Do and Thiele \cite{MR3312633} and of the author \cite{2023Fra}. The settings on $\mathcal{H}$ described above are discrete models of those on the upper half $3$-space.

\begin{proof}
The desired equivalence follows from Theorem~\ref{thm:intro_second}, Remark~\ref{rmk:suff} the definitions in \eqref{eq:cor_1} and in \eqref{eq:cor_2}, and the following auxiliary observation. For every dyadic interval $J \in \mathcal{D}$ and every function $F$ supported in $E(J)$ we have
\begin{equation*}
\norm{F}_{L^{\infty}_{\mu_\kappa}(S^2_\kappa)} = \norm{F}_{L^{\infty}_{\mu_\kappa}(\mathsf{size}_{2,\star})},
\end{equation*}
hence for all $p,q \in (0,\infty]$ and every function $F$ supported in $E(J)$ we have
\begin{equation*}
\norm{F}_{L^{p,q}_{\mu_\kappa}(S^2_\kappa)} = \norm{F}_{L^{p,q}_{\mu_\kappa}(\mathsf{size}_{2,\star})}.
\end{equation*}
\end{proof}

\appendix

\section{A proof of the inequality in \eqref{eq:Thm3.9}} \label{sec:appendix}

We prove the inequality in \eqref{eq:Thm3.9} originally stated in Proposition~3.9 in \cite{2022arXiv220408051D} applying standard interpolation results in the theory of outer $L^p$ spaces. 

\begin{proposition} [Proposition~3.9 in \cite{2022arXiv220408051D}] \label{thm:prop}
	For every $m \in \N$, $m \geq 2$ there exists a constant $C = C(m)$ such that the following property holds true. 
	
	Let $J \in \mathcal{D}$ be a dyadic interval in $\R$. Let $ a, p_1 \in (1,\infty)$, $ a \leq p_1$ and $p_2, \dots, p_m \in [1,\infty)$ satisfy
	\begin{equation*}
	\varepsilon \coloneq \Big( \sum_{j=1}^m \frac{1}{p_j} \Big) - 1 > 0.
	\end{equation*}
	Let the sizes $( \mathsf{size}_{2,\star}, \mathcal{T}_\kappa)$, $(s_2, \mathcal{T}_\kappa), \dots, (s_m, \mathcal{T}_\kappa)$ satisfy for every tree $T_{\kappa} \in \mathcal{T}_{\kappa}$ and for all functions $F_1, \dots, F_m$ supported in $E(J)$
	\begin{equation*}
	\ell^1_{\omega_\kappa} \Big( \prod_{j=1}^m F_j \Big) (T_\kappa) \leq \mathsf{size}_{2,\star} (F_1) (T_\kappa) \prod_{j=2}^m s_j (F_j) (T_\kappa).
	\end{equation*}	
	Then for all functions $F_1, \dots, F_m$ supported in $E(J)$ we have
	\begin{align*}
	\norm[\Big]{\prod_{j=1}^m F_j}_{L^1(E(J),\omega_\kappa)} \leq \frac{C a}{\varepsilon (a-1)} & \norm{F_1}_{X^{p_1,\infty}_a(J,\kappa,\mathsf{size}_{2,\star})} \times \\
	& \times \prod_{j=2}^m \max \{ \norm{F}_{L^{p_j,\infty}_{\mu_\kappa}(s_j)}, \norm{F}_{L^{\infty}_{\mu_\kappa}(s_j)} \}.
	\end{align*}	
\end{proposition}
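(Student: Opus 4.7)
The plan is to use Corollary \ref{thm:corollary} as a bridge between the implicitly defined spaces $X^{p_1,\infty}_a(J,\kappa,\mathsf{size}_{2,\star})$ and the outer Lorentz spaces $L^{p_1,\infty}_{\mu_\kappa}(S^2_\kappa)$, reducing the claim to a generalized outer H\"{o}lder's inequality written entirely in the outer $L^{p,q}_{\mu_\kappa}$ language. By Corollary \ref{thm:corollary}, we have $\norm{F_1}_{L^{p_1,\infty}_{\mu_\kappa}(S^2_\kappa)} \leq C(a,p_1) \norm{F_1}_{X^{p_1,\infty}_a(J,\kappa,\mathsf{size}_{2,\star})}$, with a constant that degenerates only as $a \to 1$, which accounts for the factor $a/(a-1)$ in the target estimate.

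After this reduction, I would invoke the hypothesis on sizes in the form of a pointwise H\"{o}lder inequality at the level of generators. Namely,
\begin{equation*}
\ell^1_{\omega_\kappa}\Big( \prod_{j=1}^m F_j \Big)(T_\kappa) \leq \mathsf{size}_{2,\star}(F_1)(T_\kappa) \prod_{j=2}^m s_j(F_j)(T_\kappa),
\end{equation*}
together with the identity $\mathsf{size}_{2,\star}(F_1)(T_\kappa) = \norm{F_1 1_{T_\kappa}}_{L^\infty_{\mu_\kappa}(S^2_\kappa)}$, gives the size-level H\"{o}lder inequality needed to feed into the outer $L^p$ machinery. Combined with the logical embedding of the classical $L^1(E(J),\omega_\kappa)$ into $L^{1,1}_{\mu_\kappa}(\ell^1_{\omega_\kappa})$, the problem reduces to a multilinear outer H\"{o}lder inequality for outer Lorentz spaces whose exponents sum to $\sum_j 1/p_j = 1 + \varepsilon > 1$.

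The slack $\varepsilon$ in the exponent sum is exploited via a dyadic decomposition of each $F_j$ with $j \geq 2$ along the super-level sets of $s_j(F_j)$. The $L^\infty_{\mu_\kappa}(s_j)$-bounded pieces supply uniform control, while the tails are estimated by the weak norm $L^{p_j,\infty}_{\mu_\kappa}(s_j)$. Summing the resulting geometric series of contributions in powers of two yields the factor $\varepsilon^{-1}$. This interpolation is by now standard in the theory of outer $L^p$ spaces; compare with the arguments in \cite{MR3312633} and in the author's previous articles. The appearance of $\max\{\norm{F_j}_{L^{p_j,\infty}_{\mu_\kappa}(s_j)}, \norm{F_j}_{L^\infty_{\mu_\kappa}(s_j)}\}$ in the statement is exactly what makes this decomposition admissible: the weak norm governs the large scales and the $L^\infty$ norm governs the small ones.

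The main obstacle is bookkeeping rather than conceptual: tracking the constants through Corollary \ref{thm:corollary} and through the dyadic decomposition so that they assemble into the claimed quantitative form $C a / (\varepsilon (a-1))$. In particular, the geometric decay ratio at each dyadic level has to be uniformly bounded away from $1$ by a quantity proportional to $\varepsilon$, and the $L^\infty$-to-Lorentz interpolation at exponents arbitrarily close to $p_j$ has to be performed so as not to generate hidden dependences on the individual $p_j$'s. Once this is done, the statement follows by combining the pointwise size H\"{o}lder with the outer Lorentz H\"{o}lder, and then replacing the outer $L^{p_1,\infty}_{\mu_\kappa}(S^2_\kappa)$ norm by the $X^{p_1,\infty}_a$ norm via Corollary \ref{thm:corollary}.
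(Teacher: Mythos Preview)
Your proposal is correct and follows the paper's route in all essentials: the embedding $L^1(E(J),\omega_\kappa) \hookrightarrow L^{1,1}_{\mu_\kappa}(\ell^1_{\omega_\kappa})$, the size-level H\"older hypothesis fed into the outer Lorentz H\"older inequality (Theorem~\ref{thm:Holder_Lorentz}), interpolation between $L^{p_j,\infty}_{\mu_\kappa}(s_j)$ and $L^{\infty}_{\mu_\kappa}(s_j)$ for $j\geq 2$, and the passage to the $X^{p_1,\infty}_a$ norm via Corollary~\ref{thm:corollary}. The only packaging difference is that, instead of a dyadic level-set decomposition, the paper chooses explicit intermediate exponents $q_j = p_j P$ and $r_j = a' q_j$ (with $P = \sum_{j\geq 2} 1/p_j$) and invokes the logarithmic convexity of outer Lorentz spaces directly, which is the same mechanism with cleaner bookkeeping. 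One small correction on your constant tracking: the constant in Corollary~\ref{thm:corollary} does \emph{not} blow up as $a\to 1$ (for fixed $p_1>a$ it stays bounded), so the factor $a/(a-1)$ is not generated there; in the paper's arrangement the parameter $a$ enters through the choice $r_j = a' q_j$ in the H\"older step, and the first factor emerging is $\norm{F_1}_{L^{a,\infty}_{\mu_\kappa}(S^2_\kappa)}$ rather than $\norm{F_1}_{L^{p_1,\infty}_{\mu_\kappa}(S^2_\kappa)}$.
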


We point out that in the article of Di Plinio and Fragkos \cite{2022arXiv220408051D} the outer measure $\mu_{\kappa}$ is renormalized with the length of the dyadic interval $J$, so a normalizing factor $\abs{J}^{-1}$ appears on the left hand side of the chain of inequalities in the statement of Proposition~3.9 in \cite{2022arXiv220408051D}.

To improve the readability of the proof, we omit explicit mention of $\kappa$ in the notation of $\mu_\kappa$ and $\omega_\kappa$.

\begin{proof}
First, a standard inequality between classical and outer $L^p$ quasi-norms (Proposition~3.6 in \cite{MR3312633}) yields
\begin{equation} \label{eq:appA_1}
\norm[\Big]{\prod_{j=1}^m F_j }_{L^1(E(J),\omega)} \leq \norm[\Big]{\prod_{j=1}^m F_j }_{L^{1,1}_\mu(\ell^1_\omega)}.
\end{equation}

Next, we recall outer H\"{o}lder's inequality for outer Lorentz $L^{p,q}$ spaces (Proposition~3.5 in \cite{2022arXiv220408051D}).
\begin{theorem} \label{thm:Holder_Lorentz}
	Let $(X,\mu,\omega)$ be a $\sigma$-finite setting. Let $s, s_1, \dots, s_m$ be sizes on $(X,\mu,\omega)$ such that for all measurable functions $F_1, \dots, F_m$ on $X$ and every measurable subset $A \in \Sigma$ we have
	\begin{equation} \label{eq:interp_condition}
	s \Big( \prod_{i=1}^m F_i \Big) (A) \leq \prod_{i=1}^m s_i(F_i) (A).
	\end{equation}
	Then for all $p, p_1, \dots, p_m \in (0,\infty]$ and all $q, q_1, \dots, q_m \in (0,\infty]$ such that
	\begin{equation*}
	\sum_{i=1}^m \frac{1}{p_i} = \frac{1}{p}, \qquad \qquad \sum_{i=1}^m \frac{1}{q_i} = \frac{1}{q},
	\end{equation*}
	we have
	\begin{equation*}
	\norm[\Big]{\prod_{i=1}^m F_i }_{L^{p,q}_\mu(s)} \leq m^{\frac{1}{p}} \norm{F_i }_{L^{p_i,q_i}_\mu(s_i)}.
	\end{equation*}
\end{theorem}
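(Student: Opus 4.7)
The plan is to reduce the claim to a pointwise inequality between decreasing rearrangements and then apply the classical Hölder inequality in $L^q((0,\infty), \diff t/t)$. Concretely, I use the characterization
\begin{equation*}
\norm{F}_{L^{p,q}_\mu(s)} = \norm[\big]{t^{\frac{1}{p}} F^{\ast}(t)}_{L^q((0,\infty), \diff t / t)}
\end{equation*}
recalled in the introduction (where the rearrangement $F^{\ast}$ is taken with respect to the size $s$, and similarly $F_i^{\ast}$ with respect to $s_i$).

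The heart of the argument is the rearrangement inequality
\begin{equation*}
\Big( \prod_{i=1}^m F_i \Big)^{\ast} \Big( \sum_{i=1}^m t_i \Big) \leq \prod_{i=1}^m F_i^{\ast}(t_i), \qquad t_1,\dots,t_m \in (0,\infty),
\end{equation*}
where the rearrangement on the left is with respect to $s$ and those on the right with respect to the $s_i$. To prove this, I fix $\alpha_i > F_i^{\ast}(t_i)$, which by the definition of $F_i^{\ast}$ forces $\mu(s_i(F_i) > \alpha_i) \leq t_i$. Selecting (up to a $\delta$-approximation, exactly as in the comments closing the proof of Theorem~\ref{thm:intro_second}) measurable subsets $\Omega_i$ with $\norm{F_i 1_{\Omega_i^c}}_{L^\infty_\mu(s_i)} \leq \alpha_i$ and $\mu(\Omega_i)$ close to $\mu(s_i(F_i) > \alpha_i)$, and setting $\Omega \coloneq \bigcup_i \Omega_i$, the monotonicity of each $s_i$ and the hypothesis \eqref{eq:interp_condition} give, for every $A \in \mathcal{A}$,
\begin{equation*}
s\Big( \prod_{i=1}^m F_i \, 1_{\Omega^c} \Big)(A) \leq \prod_{i=1}^m s_i(F_i 1_{\Omega^c})(A) \leq \prod_{i=1}^m s_i(F_i 1_{\Omega_i^c})(A) \leq \prod_{i=1}^m \alpha_i.
\end{equation*}
Subadditivity of $\mu$ then yields $\mu(s(\prod_i F_i) > \prod_i \alpha_i) \leq \sum_i \mu(\Omega_i)$. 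Unwinding the $\delta$-approximation and the definition of the rearrangement on the left-hand side yields the claimed pointwise inequality.

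Specializing to $t_i = t/m$ and using $\sum_i 1/p_i = 1/p$, I rewrite
\begin{equation*}
t^{\frac{1}{p}} \Big( \prod_{i=1}^m F_i \Big)^{\ast}(t) \leq m^{\frac{1}{p}} \prod_{i=1}^m (t/m)^{\frac{1}{p_i}} F_i^{\ast}(t/m).
\end{equation*}
Taking $L^q((0,\infty), \diff t/t)$-norms, the generalized Hölder inequality on $L^q(\diff t/t)$ with exponents $(q_1,\dots,q_m)$ together with the scale-invariance of $\diff t/t$ under $s = t/m$ produces the factor $\prod_i \norm{F_i}_{L^{p_i,q_i}_\mu(s_i)}$, leaving exactly the constant $m^{1/p}$.

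The only genuinely delicate point is the super-level-set step: the infima defining the various $\mu(s_i(F_i) > \alpha_i)$ need not be attained, so I must approximate each $\Omega_i$ from above by $(1+\delta)$ and then send $\delta \to 0^+$, in line with the convention and boilerplate already used in Section~\ref{sec:first_proof}. Everything else is bookkeeping: the monotonicity of sizes in property~(ii) is what allows the restriction of the $F_i$ to $\Omega^c$ inside each $s_i$, and the hypothesis \eqref{eq:interp_condition} is used exactly once, at the super-level-set stage.
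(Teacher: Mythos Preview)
The paper does not actually prove Theorem~\ref{thm:Holder_Lorentz}; it is quoted verbatim as Proposition~3.5 of \cite{2022arXiv220408051D} and used as a black box in Appendix~\ref{sec:appendix}. So there is no ``paper's own proof'' against which to compare.

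That said, your argument is correct and is essentially the standard proof of H\"older's inequality for Lorentz spaces, transplanted to the outer setting. The key step---the rearrangement inequality $(\prod_i F_i)^{\ast}(\sum_i t_i) \leq \prod_i F_i^{\ast}(t_i)$---is handled cleanly: the identity $(\prod_i F_i) 1_{\Omega^c} = \prod_i (F_i 1_{\Omega^c})$ lets you invoke \eqref{eq:interp_condition}, the monotonicity property~(ii) of sizes handles the passage from $\Omega^c$ to $\Omega_i^c$, and subadditivity of $\mu$ controls $\mu(\Omega)$. The $\delta$-approximation for the super-level sets is exactly the right bookkeeping, in the spirit of the remarks closing Section~\ref{sec:first_proof}. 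After that, specializing $t_i = t/m$, using $\sum_i 1/p_i = 1/p$, applying classical H\"older in $L^q((0,\infty),\diff t/t)$, and exploiting the dilation invariance of $\diff t/t$ yields the constant $m^{1/p}$ on the nose. One small remark: when some $q_i = \infty$ the classical H\"older step still goes through by pulling out the corresponding $L^\infty$ factor, and when some $p_i = \infty$ (hence $q_i = \infty$ in the paper's conventions) the factor $(t/m)^{1/p_i}$ is $1$ and the $L^{\infty}(\diff t/t)$ norm of $F_i^{\ast}$ recovers $\norm{F_i}_{L^\infty_\mu(s_i)}$; you may want to state these endpoint cases explicitly.
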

We apply the previous theorem with the sizes $s$ and $s_1$ defined by
\begin{equation*}
s = \ell^1_\omega, \qquad s_1 = S^2_{\kappa},
\end{equation*}
and the sizes $s_2, \dots, s_m$ satisfying the condition associated with the inequality in \eqref{eq:interp_condition}. Moreover, starting with the exponents $a, p_1, p_2, \dots, p_m$ satisfying the conditions in the statement of Proposition~\ref{thm:prop} we define the exponents $r_2, \dots, r_m \in (1,\infty)$ and $q_2, \dots, q_m \in (1,\infty) $ by
\begin{equation*}
P = \sum_{j=2}^m \frac{1}{p_j} = \frac{1}{p_1'} + \varepsilon, \qquad \qquad q_j = p_j P, \qquad \qquad r_j = a' q_j.
\end{equation*}
Therefore we obtain the inequality
\begin{equation} \label{eq:appA_2}
\norm[\Big]{\prod_{j=1}^m F_j }_{L^{1,1}_\mu(\ell^1_\omega)} \leq m \norm{F_1}_{L^{a,\infty}_\mu(s_1)} \prod_{j=2}^m \norm{F_j}_{L^{r_j,q_j}_\mu(s_j)}, 
\end{equation}

In particular, we observe that 
\begin{equation*}
P a' \geq \Big( \frac{1}{p_1'} + \varepsilon \Big) p_1' > 1,
\end{equation*}
hence for every $j \in \{2, \dots, m\}$ we have
\begin{equation*}
r_j > p_j,
\end{equation*}
hence the logarithmic convexity of the outer Lorentz $L^{p,q}$ spaces (Proposition~3.3 in \cite{MR3312633}) yields the inequality
\begin{equation} \label{eq:appA_3}
\norm{F_j}_{L^{r_j,q_j}_\mu(s_j)} \leq \Big( \frac{P}{\varepsilon} \Big)^{\frac{1}{q_j}} \norm{F_j}^{\frac{r_j - p_j}{r_j}}_{L^{\infty}_\mu(s_j)} \norm{F_j}^{\frac{p_j}{r_j}}_{L^{p_j,\infty}_\mu(s_j)}.
\end{equation}

Finally, the equivalence between the $X^{p,\infty}_a(J,\kappa,\mathsf{size}_{2,\star})$ spaces and the outer $L^{p,\infty}_\mu(S^2_\kappa)$ ones (Corollary~\ref{thm:corollary}) yields the inequality
\begin{equation} \label{eq:appA_4}
\norm{F}_{L^{p,\infty}_\mu(S^2_\kappa)} \leq \sqrt{\frac{p-a}{a}} \Big( \frac{p}{p-a} \Big)^{\frac{p}{2a}} \norm{F}_{X^{p,\infty}_a(J,\kappa,\mathsf{size}_{2,\star})} .
\end{equation}

The inequalities in \eqref{eq:appA_1} and in \eqref{eq:appA_2}--\eqref{eq:appA_4} yield the result stated in Proposition~3.9 in \cite{2022arXiv220408051D}. 
\end{proof}

\section{Proofs of Lemma~\ref{thm:lemma_2} and Lemma~\ref{thm:lemma_1}} \label{sec:appendix_2}

\begin{proof} [Proof of Lemma~\ref{thm:lemma_2}]
	Let $\varepsilon > 0$. By the existence of a measurable subset satisfying the inequality in \eqref{eq:condition_3}, for every $\lambda \in (0,\rho)$ we have
	\begin{equation*}
	\mu(\ell^r_\omega(f) > \lambda) > 0.
	\end{equation*}
	For every $\lambda \in (0,\rho)$ we choose a measurable subset $\Omega_\lambda \in \Sigma$ such that
	\begin{equation*}
	\norm{f 1_{\Omega_\lambda^c} }_{L^\infty_\mu(\ell^r_\omega)} \leq \lambda, \qquad \qquad \mu(\Omega_\lambda) \leq (1+\varepsilon) \mu(\ell^r_\omega(f) > \lambda).
	\end{equation*}
	We have
	\begin{align*}
	\norm{f}^r_{L^\infty_\mu(\ell^r_\omega)} \mu(\Omega_\lambda) & \geq \norm{f 1_{\Omega_\lambda} }^r_{L^r(X,\omega)} \\
	& \geq \norm{f 1_{A} }^r_{L^r(X,\omega)} - \norm{f 1_{A} 1_{\Omega_\lambda^c} }^r_{L^r(X,\omega)} \\
	& \geq \mu(A) ( \rho^r - \lambda^r ).
	\end{align*}
	Taking $\varepsilon > 0$ arbitrarily small we obtain the desired inequality.
\end{proof}

\begin{proof}[Proof of Lemma~\ref{thm:lemma_1}]
	We argue by contradiction and we assume that there exists no measurable subset $B \in \Sigma$ such that
	\begin{equation*}
	\ell^r_\omega(f) (B) \geq \lambda,
	\end{equation*}
	hence
	\begin{equation*}
	\lambda \geq \norm{f}_{L^\infty_\mu(\ell^r_\omega)},
	\end{equation*}
	This yields a contradiction with the inequality in \eqref{eq:condition}.
	
	Therefore, there exists a measurable subset $B \in \Sigma$ satisfying the first inequality in \eqref{eq:condition_2}. We define the collection $\{ B_n \colon n \in \N_\lambda \} \subseteq \Sigma$ of measurable subsets by a forward recursion on $n \in \N_\lambda$, where $\N_\lambda$ is a finite non-empty initial string of $\N$. We define the subset $A_0 = \varnothing$ and for all $n \in \N_\lambda$ we define the measurable subset $A_n \in \Sigma$ by
	\begin{equation*}
	A_n = \bigcup_{m \leq n} B_m,
	\end{equation*}
	and the collection $\mathcal{B}_n \subseteq \Sigma$ of measurable subsets of $X$ by
	\begin{equation*}
	\mathcal{B}_n = \Big\{ B \in \Sigma \colon \ell^r_\omega(f 1_{A^c_{n-1}})(B) \geq \lambda \Big\}.
	\end{equation*}
	In particular, we have $\mathcal{B}_1 \neq \varnothing$. 
	
	If $\mathcal{B}_n$ is empty, we define $\N_\lambda \subseteq \N$ by
	\begin{equation*}
	\N_\lambda = \{ 1, \dots, n-1 \}.
	\end{equation*}
	If $\mathcal{B}_n$ is not empty, by Lemma~\ref{thm:lemma_2} for every measurable subset $B \in \mathcal{B}_n$ we have
	\begin{equation} \label{eq:app_4}
	\mu(B) + \sum_{m \in \N, m < n} \mu(B_m) \leq C \mu(\ell^r_\omega(f) \geq \lambda/2) \leq C 2^{p} \lambda^{-p} \norm{f}^p_{L^{p,\infty}_\mu(\ell^r_\omega)} < \infty,
	\end{equation}
	hence there exists $j = j(n) \in \Z$ such that
	\begin{equation*}
	\sup \Big\{ \mu(B) \colon B \in \mathcal{B}_n \Big\} \in (2^j, 2^{j+1}].
	\end{equation*}
	We choose $B_n \in \mathcal{B}_n$ such that
	\begin{equation*}
	\mu(B_n) \in (2^j,2^{j+1}].
	\end{equation*}
	By the inequality in \eqref{eq:app_4} there exists $N = N(r,\lambda,\norm{f}_{L^{\infty}_\mu(\ell^r_\omega)}) \in \N$ such that
	\begin{equation*}
	j(n+N) \leq j(n) - 1,
	\end{equation*}
	We argue by contradiction and we assume there exists a collection $\{ B_{n + i} \colon i \in \{0, \dots, N \} \} \subseteq \Sigma$ of pairwise disjoint measurable subsets such that for every $i \in \{ 0, 1, \dots, N \}$ we have
	\begin{equation*}
	\ell^r_\omega(f)(B_{n+i}) \geq \lambda, \qquad \qquad \mu(B_{n+i}) \in (2^{j(n)},2^{j(n)+1}].
	\end{equation*}
	Then we have
	\begin{equation*}
	\norm[\Big]{ f 1_{\bigcup_{i = 0}^N B_{n+i} } }^r_{L^r(X,\omega)} \geq \lambda^r \sum_{i = 0}^N \mu(B_{n+i}) \geq \lambda^r (N+1) 2^{j(n)},
	\end{equation*}
	and by the definition of $j(n)$ we have
	\begin{equation*}
	\mu \Big( \bigcup_{i = 0}^N B_{n+i} \Big) \leq 2^{j(n)+1},
	\end{equation*}
	hence
	\begin{equation*}
	\ell^r_\omega( f ) \Big( \bigcup_{i = 0}^N B_{n+i} \Big)  \geq \Big( \frac{N+1}{2} \Big)^{\frac{1}{r}} \lambda > \norm{f}_{L^\infty_\mu(\ell^r_\omega)},
	\end{equation*}
	yielding a contradiction for $N$ too big.
	
	Therefore the sequence $\{ j(n) \colon n \in \N_\lambda \}$ is non-increasing and we have that either $\N_\lambda$ is finite or it is infinite and
	\begin{equation*}
	\inf \Big\{ j(n) \colon n \in \N_\lambda \Big\} = -\infty.
	\end{equation*}
	Defining the measurable subset $B \in \Sigma$ by
	\begin{equation*}
	B = \bigcup_{n \in \N_\lambda} B_n,
	\end{equation*}
	we obtain the desired inequality.
\end{proof}

\bibliographystyle{amsplain}
\bibliography{mybibliography}

\end{document}